\numberwithin{equation}{section}
\newtheorem{thm}{Theorem}[section]
\newtheorem{pro}[thm]{Proposition}
\newtheorem{cor}[thm]{Corollary}
\theoremstyle{definition}
\newtheorem{dfn}[thm]{Definition}
\newtheorem{que}[thm]{Question}
\theoremstyle{remark}
\numberwithin{equation}{section}
\begin{document}

\title[On homologically locally connected spaces]
{On homologically locally connected spaces}

\author{A. Koyama}
\address{Faculty of Science and Engineering,
Waseda University, Ohkubo 3-4-1, Shinjuku,Tokyo,
169-8555, Japan} \email{akoyama@waseda.jp}

\author{V. Valov}
\address{Department of Computer Science and Mathematics,
Nipissing University, 100 College Drive, P.O. Box 5002, North Bay,
ON, P1B 8L7, Canada} \email{veskov@nipissingu.ca}

\thanks{The author was partially supported by NSERC
Grant 261914-13.}

 \keywords{fixed points, homological $UV^n$ sets, homological selections}

\subjclass[2010]{Primary 54H25, 55M20; Secondary 55M10, 55M15}
\begin{abstract}
We provide some properties and characterizations of homologically $UV^n$-maps
and $lc^n_G$-spaces.
We show that there is a parallel between  recently introduced by Cauty \cite{ca} algebraic $ANR$'s
and homologically $lc^n_G$-metric spaces,
and this parallel is similar to the parallel between ordinary $ANR$'s and $LC^n$-metric spaces.
We also show that
there is a similarity between the properties of $LC^n$-spaces and $lc^n_G$-spaces.
Some open questions are raised.
\end{abstract}
\maketitle\markboth{}{Homologically locally connected spaces}




\section{Introduction}

All spaces are assumed to be paracompact and all single-valued maps are continuous.
Everywhere below singular homology $H_n(X;G)$, reduced in dimension 0,
with a coefficient group $G$ is considered.
By default,
if not explicitly stated otherwise,
$G$ is a commutative ring with a unit $\rm e$.
The following homology counterpart of the well known notion of a $UV^n$-set
was introduced in \cite{vv1}:
A closed set $A\subset X$ is said to be {\em homologically $UV^n(G)$}
if every neighborhood $U$ of $A$ in $X$ contains another neighborhood $V$
such that the inclusion $V\hookrightarrow U$ induces trivial homomorphisms
$H_k(V;G)\to H_k(U;G)$
(notation $A\stackrel{H_k}{\hookrightarrow}X$) for all $k\leq n$.
Obviously,
every $UV^n$-subset of $X$ is $UV^n(G)$ for any $G$
(below we call the $UV^n$-sets {\em homotopically $UV^n$}
in order to distinguish them from homologically $UV^n(G)$-sets).
It can be shown that if $A$ is homologically $UV^n(\mathbb Z)$,
where $\mathbb Z$ is the group of the integers,
then it is $UV^n(G)$ for any $G$ (see, for example  \cite[Proposition 4.8]{vv}).
Moreover,
following the proof of Proposition 7.1.3 from \cite{sa},
one can show that $A$ is homologically $UV^n(G)$ in a given metric $ANR$-space $X$
if and only if it is homologically $UV^n(G)$ in any metric $ANR$-space
that contains $A$ as a closed set.
We also say that a surjective map $f:X\to Y$ is {\em homologically $UV^n(G)$}
if all fibres $f^{-1}(y)$ of $f$ are homologically $UV^n(G)$-subsets of $X$.
In particular,
$X$ is called {\em homologically locally connected in dimension $n$ with respect to $G$}
if the identity map on $X$ is an $UV^n(G)$-map,
notation $X$ is $lc^n_G$.

In this paper we provide some properties and characterizations of homologically $UV^n$-maps
and $lc^n_G$-spaces.
We show that there is a parallel between  recently introduced by Cauty \cite{ca}
algebraic $ANR$'s and homologically $lc^n_G$-metric spaces,
and this parallel is similar to the parallel between ordinary $ANR$'s and $LC^n$-metric spaces.
We also show that there is an analogy
between the properties of $LC^n$-spaces and $lc^n_G$-spaces.

More precisely,
Section 2 contains some definitions and properties of algebraic $ANR$'s
about the existence and extensions of homotopies between close chain morphisms.
Section 3 is devoted to closed homotopically $UV^n$-surjections.
We show that the well known properties of closed $UV^n$-maps
(concerning extensions of partial realizations and approximate lifting of maps)
have chain morphisms' analogues for homologically $UV^n$-maps.
We also provide some Dugundji type extensions and approximate extensions of chain morphisms.
Obviously, all results established in Section 3 are valid and for $lc^n_G$-spaces.
In Section 4 we characterize $lc^n_G$-spaces in terms of existence of homotopies
between any close chain morphisms (Proposition 4.1).
The following two questions are discussed in that section:
\begin{que}
Is any metric $n$-dimensional $lc^n_G$-space an absolute neighborhood algebraic retract?
\end{que}

\begin{que}
Let $f:Y\to X$ be a closed $UV^n(G)$-surjection between metric spaces.
Is it true that $X$ is an  $lc^n_G$-space?
\end{que}

In case of $LC^n$-spaces and $UV^n$-maps the above two questions have positive answers.
We still do not know the answer to these questions,
but we introduce approximate versions of algebraic $ANR$-spaces and $lc^n_G$-spaces
and show that any space satisfying the hypotheses of Question 1.1 (resp., Question 1.2)
is an approximate $ANR$ (resp., approximate $lc^n_G$).

The main technical tool are chain morphisms between chain complexes.
By a chain complex $C=\{C_k\}_{k\geq 0}$
we mean a sequence of $G$-modules $C_k$ and
boundary homomorphisms $\partial_k:C_k\to C_{k-1}$ such that
all compositions $\partial_{k}\circ\partial_{k+1}$ are trivial.
A chain morphism $\varphi :C\to C'$ between two chain complexes
is a sequence of homomorphisms $\varphi_k:C_k\to C_k'$
such that $\varphi_k\circ\partial_{k+1}=\partial_{k+1}'\circ\varphi_{k+1}$,
and $\varphi_0$ commutes with the augmentations in $C$ and $C'$.
We consider two types of chain complexes,
oriented chain complexes $C(K)=\{C_k(K;G)\}_{k\geq 0}$,
where $K$ is a simplicial complex, and
singular chain complexes $S(X;G)=\{S_k(X;G)\}_{k\geq 0}$,
where $X$ is a given topological space and
$S_k(X;G)$ is the group of all singular $k$-chains with coefficients from $G$.
If $\sigma:\triangle^k\to X$ is a singular $k$-simplex ($\triangle^k$ is the standard $k$-simplex),
we denote by $|\sigma|$ the {\em carrier} $\sigma(\triangle^k)$ of $\sigma$.
Similarly, we put $|c|=\bigcup_i|\sigma_i|$ for any chain $c\in S_k(X;G)$,
where $c=\sum_ig_i\sigma_i$ is the irreducible representation of $c$.
We agree that $|c|=\varnothing$ if $c=0$.
For any open cover $\mathcal U$ of $X$ let $S(X,\mathcal U;G)$ stand
for the subgroup of $S(X;G)$ generated by all singular simplexes $\sigma$
with $|\sigma|\subset U$ for some $U\in\mathcal U$.

By a sub-complex of $S(X;G)$
we mean any sub-family $\mathrm S\subset S(X;G)$
such that $\partial c\in\mathrm S$ for any chain $c\in\mathrm{S}$.
For example, $S(X,\mathcal U;G)$ and $S(A;G)$ are sub-complexes of $S(X;G)$
for any open cover $\mathcal U$ of $X$ and any subset $A\subset X$.
We also consider the $n$-dimensional sub-complex
$S^{(n)}(X;G)=\{S_k(X;G)\}_{k\leq n}$ of $S(X;G)$.
A face $\tau$ of a singular simplex $\sigma:\triangle^k\to X$ from $S(X;G)$ is
the restriction of the map $\sigma$ onto a face of the standard simplex $\triangle^k$.
In particular, a vertex of $\sigma$ is the singular $0$-simplex $f_v:v\to X$
with $v$ being a vertex of $\triangle^k$.
If $Y$ is a space and $\mathcal U$ is an open cover of $Y$,
we say two chain morphisms
$\varphi,\psi:S(X;G)\to S(Y;G)$ (resp., $\varphi,\psi:C(K;G)\to S(Y;G)$) are {\em $\mathcal U$-close}
provided for each simplex $\sigma\in S(Y;G)$ (resp., $\sigma\in K$)
there exists $U_\sigma\in\mathcal U$ with
$|\varphi(\tau)|\cup |\psi(\tau)|\subset U_\sigma$ for all faces $\tau$ of $\sigma$.
If $\varphi$ is $\mathcal U$-close to the trivial morphism,
then $\varphi$ is said to be {\em $\mathcal U$-small}.
We also say that $\varphi$ is {\em correct} provided $\varphi(\sigma)$ (resp., $\varphi(v)$)
is a singular $0$-simplex in $S(Y;G)$ for each $0$-simplex $\sigma\in S(X;G)$
(resp., for each vertex $v$ of $K$).




\section{Algebraic $ANR$'s}

Let $K$ be a simplicial complex,
$X$ be a space and $\mathcal U$ an open cover of $X$.
According to \cite{ca},
a chain morphism $\varphi:C(L;G)\to S(X;G)$ is a
{\em partial algebraic realization of $C(K;G)$} in $\mathcal U$
provided $L$ is a sub-complex of $K$ containing the vertex set $K^{(0)}$ of $K$
and for every simplex $\sigma\in K$ there exists $U_\sigma\in\mathcal U$
such that $|\varphi(\tau)|\subset U_\sigma$ for all faces $\tau$ of $\sigma$ with $\tau\in L$.
If $L=K$, then $\varphi$ is called a {\em full algebraic realization of $C(K;G)$ in $\mathcal U$}.
Obviously, $\varphi:C(K;G)\to S(X;G)$ is a full algebraic realization of $C(K;G)$ in $\mathcal U$
if and only if $\varphi$ is $\mathcal U$-small.

Cauty \cite{ca} introduced an important class of metric spaces more general than metric $ANR$'s.
\begin{dfn}\cite{ca}
A metric space $X$ is said to be an {\em absolute neighborhood algebraic $G$-retract}
(briefly, {\em algebraic $ANR_G$})
if for every open cover $\mathcal U$ of $X$
there is another open cover $\mathcal V$ refining $\mathcal U$
such that, for any simplicial  complex $K$,
any correct partial algebraic realization of  $C(K;G)$ in $\mathcal V$
extends to a full algebraic realization of $C(K;G)$ in $\mathcal U$.
Any acyclic algebraic $ANR_G$ is called an {\em algebraic absolute $G$-retract}
(br., algebraic $AR_G$).
\end{dfn}

Here are some properties of algebraic $ANR_G$'s.
\begin{dfn}\cite{ca}
A closed subset $A$ of a space $X$ is said to be a {\em neighborhood  algebraic $G$-retract} of $X$
if there are a neighborhood $U$ of $A$ in $X$, an open cover $\mathcal U$ of $U$ and
a chain morphism $\mu:S(U,\mathcal U;G)\to S(A;G)$ such that:
\begin{itemize}
\item[(i)] $\mu(c)=c$ for all $c\in S(A;G)\cap S(U,\mathcal U;G)$;
\item[(ii)] For every $x\in A$ and its neighborhood $V_x$ in $A$
there exists a neighborhood $W_x\subset U$
with $\mu\big(S(W_x;G)\cap S(U,\mathcal U;G)\big)\subset S(V_x;G)$.
\end{itemize}
If $U=X$ and $\mathcal U=\{X\}$, $A$ is called an {\em algebraic $G$-retract of $X$}.
\end{dfn}

\begin{thm}\cite{ca}
A metric space $X$ is an algebraic $ANR_G$ if and only if
$X$ is a neighborhood  algebraic $G$-retract of every metric space containing $X$ as a closed set.
\end{thm}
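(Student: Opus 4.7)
The plan is to prove the two implications separately, both paralleling the classical characterization of ordinary $ANR$'s as neighborhood retracts, but carried out at the chain-morphism level.

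For the implication $(\Leftarrow)$, I would isometrically embed $X$ as a closed subset of a convex set $E$ in a Banach space (for instance via the Kuratowski--Wojdys{\l}awski embedding) and apply the hypothesis to obtain a neighborhood $U$ of $X$ in $E$, an open cover $\mathcal U_0$ of $U$, and a chain morphism $\mu:S(U,\mathcal U_0;G)\to S(X;G)$ satisfying (i) and (ii). Given an arbitrary open cover $\mathcal U$ of $X$, I would use condition (ii) to extract an open cover $\mathcal V$ of $X$, refining the trace of $\mathcal U_0$ on $X$, so that $\mu$ carries any $\mathcal V$-small chain supported near $X$ to a $\mathcal U$-small chain in $S(X;G)$. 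Given a correct partial algebraic realization $\varphi:C(L;G)\to S(X;G)$ of $C(K;G)$ in $\mathcal V$, I would extend $\varphi$ skeleton by skeleton over $K$ to a full algebraic realization $\widetilde{\varphi}:C(K;G)\to S(U,\mathcal U_0;G)$: at each inductive step the cycle $\widetilde{\varphi}(\partial\sigma)$ bounds inside an arbitrarily small $\mathcal U_0$-enlargement because small convex subsets of $E$ are contractible. Then $\mu\circ\widetilde{\varphi}$ is a full algebraic realization of $C(K;G)$ in $\mathcal U$, and its agreement with $\varphi$ on $C(L;G)$ follows from (i) once $\mathcal V$ is chosen to refine $\mathcal U_0|_X$.

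For the implication $(\Rightarrow)$, I would embed $X$ as a closed subset of a metric space $Y$ and construct $(U,\mathcal U_0,\mu)$ via a nerve construction on the exterior $Y\setminus X$. Pick a locally finite open cover $\mathcal W$ of $Y\setminus X$ whose mesh is controlled by the distance to $X$ (the standard Dugundji-type cover), and let $K$ be its nerve. For each $W\in\mathcal W$ select a point $\lambda(W)\in X$ whose distance to $W$ is comparable to the diameter of $W$; the vertex map $\lambda$ then defines a correct partial algebraic realization of $C(K;G)$ in any prescribed open cover $\mathcal U$ of $X$, and by Definition 2.1 it extends to a full algebraic realization $\widetilde{\lambda}:C(K;G)\to S(X;G)$ in $\mathcal U$. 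The canonical barycentric projection $\pi:Y\setminus X\to|K|$ induces, via iterated singular subdivision, a chain map $\pi_{\#}$ on singular chains that are small with respect to $\mathcal W$, and the composition $\widetilde{\lambda}\circ\pi_{\#}$ defines $\mu$ on chains lying in $Y\setminus X$. On chains in $S(X;G)$ I would set $\mu$ equal to the identity; on singular simplices whose images straddle $X$, I would subdivide barycentrically until each piece sits either in a chosen neighborhood of $X$ or entirely in $Y\setminus X$, and then glue the two partial definitions by a prism-style chain homotopy. Condition (ii) will follow from the mesh control of $\mathcal W$ near $X$, since $\widetilde{\lambda}$ sends simplices of $K$ corresponding to $W$'s close to a point $x\in X$ into any preassigned neighborhood of $x$.

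The main obstacle will be the interface analysis in $(\Rightarrow)$: producing a globally defined chain morphism $\mu$ that restricts to the identity on $S(X;G)\cap S(U,\mathcal U_0;G)$, agrees with $\widetilde{\lambda}\circ\pi_{\#}$ on chains in $Y\setminus X$, and commutes with $\partial$ on those singular simplices whose image meets both $X$ and $Y\setminus X$. The standard toolkit of barycentric subdivision combined with a prism operator handles this in principle, but the choice of neighborhood $U$ and cover $\mathcal U_0$ must be carefully tuned so that the subdivision process terminates uniformly and so that condition (ii) can be verified from the geometry of $\mathcal W$; this bookkeeping is where I expect the bulk of the technical work to reside.
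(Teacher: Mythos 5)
The paper does not prove Theorem~2.3; it is quoted from Cauty~\cite{ca} without an argument, so there is no in-paper proof to compare your proposal against. That said, I can assess your sketch on its own terms.

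Your $(\Leftarrow)$ direction is sound in outline and is in fact the same mechanism used later in the paper to prove Proposition~2.4: embed $X$ as a closed subset of a normed (or convex) space, pull back the hypothesized neighborhood $G$-retraction $\mu:S(U,\mathcal U_0;G)\to S(X;G)$, build a skeletal extension $\widetilde\varphi$ of the partial realization into small convex subsets of the ambient space (where cycles bound because convex sets are acyclic), and then push forward by $\mu$; condition~(ii) controls the size of $\mu\circ\widetilde\varphi$ and condition~(i) gives agreement with $\varphi$ on $C(L;G)$. This is the right argument.

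The $(\Rightarrow)$ direction as sketched has a genuine gap at the interface you yourself flag. Condition~(i) demands the \emph{strict} equality $\mu(c)=c$ for every $c\in S(X;G)\cap S(U,\mathcal U_0;G)$, not merely agreement up to chain homotopy. Your plan defines $\mu$ on chains straddling $X$ and $Y\setminus X$ by first iterating barycentric (or singular) subdivision until every piece lies either in $Y\setminus X$ or in a small collar of $X$, and then gluing the two partial definitions by a prism homotopy. But subdivision replaces a chain $c$ by $\mathrm{Sd}^k(c)\neq c$, so the resulting $\mu$ can only be chain-homotopic to the identity on chains in $X$, never equal to it; a prism operator repairs homology classes, not the chain map itself. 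Moreover, $\pi$ is defined only on $Y\setminus X$, so $\pi_\sharp$ genuinely cannot see the straddling simplices without such a modification, and the modification is exactly what breaks~(i). A workable route must avoid subdividing chains that already lie in $X$: for instance, fix $\mu$ to be the literal identity on $S(X;G)\cap S(U,\mathcal U_0;G)$ and on the vertex level set $\mu_0(z)=z$ for $z\in X$, $\mu_0(z)=\lambda(z)\in X$ near $z$ for $z\notin X$, and then extend $\mu$ degree by degree over the remaining singular simplices of $S(U,\mathcal U_0;G)$, filling the cycles $\mu(\partial\sigma)$ inside $X$ by an application of the algebraic $ANR_G$ property (via small full realizations). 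This inductive extension is where the hypothesis on $X$ must actually be invoked at every degree; in your sketch it is used only once, to extend the vertex map $\lambda$ over the nerve, and the remaining difficulties are deferred to a gluing step that cannot satisfy~(i) as stated.
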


Next result is an analogue of the $ANR$'s properties concerning close maps and small homotopies.
Recall that if $\varphi, \phi: S(Y;G)\to S(X;G)$ are two chain morphisms,
then $D:S(Y;G)\to S(X;G)$ is a chain homotopy between $\varphi$ and $\psi$
provided there exists a sequence of homomorphisms
$D_k:S_k(Y;G)\to S_{k+1}(X;G)$, $k\geq 0$, such that
$\partial D_0(\sigma)=\varphi(\sigma)-\psi(\sigma)$
for any singular $0$-simplex $\sigma\in S_0(Y;G)$ and
$\partial D_k(\sigma)=\varphi(\sigma)-\psi(\sigma)-D_{k-1}(\partial\sigma)$
provided $\sigma\in S_k(Y;G)$ is a singular $k$-simplex with $k\geq 1$.
We say that $D$ is {\em $\mathcal U$-small},
where $\mathcal U$ is an open cover of $X$
if for any $\sigma\in S(Y;G)$ there exists $U_\sigma\in\mathcal U$
such that $|D(\tau)|\cup |\varphi(v)|\cup|\psi(v)|\subset U_\sigma$
for all singular simplexes $\tau\in S(Y;G)$,
which are faces $\tau$ of $\sigma$, and all vertexes $v$ of $\sigma$.

\begin{pro}
If $X$ is metric algebraic $ANR_G$,
then for any open cover $\mathcal U$ of $X$
there is an open cover $\mathcal V$ of $X$ refining $\mathcal U$ such that,
for any two correct $\mathcal V$-close chain morphisms $\varphi, \phi: S(Y;G)\to S(X;G)$,
where $Y$ is an arbitrary space, and any $\mathcal V$-small chain homotopy $D:S(A;G)\to S(X;G)$
between $\varphi|S(A;G)$ and $\phi|S(A;G)$ with $A$ being a closed set in $Y$,
there exists a $\mathcal U$-small chain homotopy $\Phi:S(Y;G)\to S(X;G)$
between $\varphi$ and $\phi$ such that $\Phi(c)=D(c)$ for all $c\in S(A;G)$.
\end{pro}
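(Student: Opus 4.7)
The plan is to encode the desired chain homotopy $\Phi$ as the image, under a chain morphism from a ``prism'' simplicial complex into $S(X;G)$, of certain prism-type chains, and then to obtain that chain morphism as an extension using the defining property of algebraic $ANR_G$. Given $\mathcal{U}$, apply Definition~2.1 to obtain a refinement $\mathcal{U}_1 \prec \mathcal{U}$ such that, for any simplicial complex $K$, every correct partial algebraic realization of $C(K;G)$ in $\mathcal{U}_1$ extends to a full algebraic realization of $C(K;G)$ in $\mathcal{U}$. Then pick $\mathcal{V}$ to be a star-refinement of $\mathcal{U}_1$; this is needed so that the $\mathcal{V}$-sized data coming from $\varphi$, $\phi$ and $D$ over geometrically adjacent singular simplices can be combined into a single $\mathcal{U}_1$-neighbourhood when one verifies the covering condition of ``partial algebraic realization''.

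Next, construct an abstract simplicial complex $K$ as follows: for each singular $k$-simplex $\sigma\colon \Delta^k \to Y$, introduce a top copy $\sigma^t$, a bottom copy $\sigma^b$, and the $k+1$ top-dimensional simplices $P_0^\sigma,\dots,P_k^\sigma$ of the standard prism decomposition of $\Delta^k \times [0,1]$, glued across singular simplices according to the face operators. Let $L$ be the subcomplex containing $K^{(0)}$, all $\sigma^t$, all $\sigma^b$, and all $P_i^\sigma$ with $\sigma\in S(A;G)$. Define a chain morphism $\mu\colon C(L;G)\to S(X;G)$ by $\mu(\sigma^t)=\varphi(\sigma)$, $\mu(\sigma^b)=\phi(\sigma)$, and, for $\sigma\in S(A;G)$, by distributing the chain $D(\sigma)$ among the values $\mu(P_0^\sigma),\dots,\mu(P_k^\sigma)$ inductively on $\dim\sigma$, consistently with previously fixed values on faces; the chain-homotopy identity $\partial D + D \partial = \varphi - \phi$ on $A$ and the prism boundary formula
\[
\partial\Bigl(\sum_{i=0}^k P_i^\sigma\Bigr) = \sigma^t - \sigma^b - \sum_{i=0}^k P_i^{\partial\sigma}
\]
make such a distribution possible. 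The $\mathcal{V}$-closeness of $\varphi,\phi$ and $\mathcal{V}$-smallness of $D$, together with the star-refinement property of $\mathcal{V}$, translate into $\mu$ being a correct partial algebraic realization of $C(K;G)$ in $\mathcal{U}_1$.

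By the choice of $\mathcal{U}_1$, $\mu$ extends to a full algebraic realization $\tilde\mu\colon C(K;G)\to S(X;G)$ in $\mathcal{U}$; setting $\Phi(\sigma)=\sum_{i=0}^k\tilde\mu(P_i^\sigma)$ and extending linearly, the prism identity gives $\partial\Phi(\sigma)=\varphi(\sigma)-\phi(\sigma)-\Phi(\partial\sigma)$, so $\Phi$ is a chain homotopy between $\varphi$ and $\phi$; its $\mathcal{U}$-smallness is inherited from $\tilde\mu$; and $\Phi(c)=D(c)$ for $c\in S(A;G)$ because $\tilde\mu$ extends $\mu$. The main obstacle is the inductive distribution of $D(\sigma)$ among $\mu(P_0^\sigma),\dots,\mu(P_k^\sigma)$: the chain $D(\sigma)$ is a single $(k+1)$-chain in $X$, whereas $\mu$ must assign a chain to each of the $k+1$ prism simplices, and these assignments must simultaneously respect boundary identifications inherited from neighbouring prism pieces, previously chosen values on faces, and the covering condition for $\mathcal{U}_1$. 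Navigating this coherence---essentially a prism-type acyclic-models argument leveraging the local acyclicity implicit in the algebraic $ANR_G$ property---is where the main work lies; the remaining checks (face identifications in $K$, correctness of $\mu$ on $0$-simplices, and the chain-morphism identity on top/bottom copies) are standard prism bookkeeping.
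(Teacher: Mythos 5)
Your approach differs substantially from the paper's and, as written, has a gap which you yourself flag as ``the main obstacle'' but do not resolve. The paper does not work with partial realizations of a prism complex at all. Instead it invokes Theorem~2.3 to embed $X$ as a closed subset of a normed space $E$ and to obtain a chain-level neighborhood $G$-retraction $\mu\colon S(W,\mathcal W;G)\to S(X;G)$. The homotopy $\widetilde D$ is first built into $S(T,\widetilde{\mathcal V}_1;G)$ for a suitable neighborhood $T\subset E$ of $X$, where all the required bounding chains exist automatically because the carriers $\Lambda_\sigma$ are taken to be \emph{convex} (hence acyclic) subsets of $E$; one then composes with $\mu$ to push $\widetilde D$ back into $S(X;G)$. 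The convexity of the ambient pieces is the lever that makes the inductive construction go through.

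Your plan tries to produce the bounding chains directly in $X$ from the extension-of-partial-realizations property, and this is where it breaks. A subcomplex of $K$ is closed under faces, so $L$ necessarily contains the internal prism faces over simplices of $A$, and a chain morphism $\mu\colon C(L;G)\to S(X;G)$ must already be defined on them (and on each $P_i^\sigma$ individually) with $\partial\mu(P_i^\sigma)=\mu(\partial P_i^\sigma)$. That forces you, before any extension is available, to solve prescribed bounding problems in $X$ --- already for a $1$-simplex $\sigma\in A$ you need a $1$-chain in $X$ bounding the $0$-cycle $\phi(v_1)-\varphi(v_0)$ to define $\mu$ on the diagonal. Nothing in the algebraic $ANR_G$ hypothesis licenses this: the extension property gives you a full realization only once you have a valid correct partial realization in hand, which is exactly the object you are trying to build. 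The ``acyclic-models argument leveraging the local acyclicity implicit in the algebraic $ANR_G$ property'' you gesture at is precisely the missing step, and the paper supplies it through Theorem~2.3 and the ambient convex geometry, not through Definition~2.1. There is also a secondary foundational issue: the singular simplices of $Y$ form a simplicial \emph{set}, not a simplicial complex (distinct simplices can share vertex sets and distinct faces of one simplex can coincide), so the prism ``simplicial complex'' $K$ glued along singular face operators is not obviously an object to which Definition~2.1 applies.
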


\begin{proof}
Suppose $X$ is an algebraic $ANR_G$ and $\mathcal U$ is an open cover of $X$.
Embed $X$ as closed subset of a normed space $E$ and let
$\mu:S(W,\mathcal W;G)\to S(X;G)$ be an algebraic neighborhood $G$-retraction,
where $W$ is a neighborhood of $X$ in $E$ and $\mathcal W$ an open cover of $W$.
For every $x\in X$ choose $U_x\in\mathcal U$ and $W_x\in\mathcal W$ containing $x$.
We can assume that $U_x$ and $W_x$ satisfy the inclusion
$\mu(S(W_x;G))\subset S(U_x;G)$ for every $x\in X$.
Let $V_x$ be a convex open subset of
$W$ such that $V_x\cap X\subset U_x$ and $V_x\subset W_x$, $x\in X$.
Denote $T=\bigcup_{x\in X}V_x$, $\widetilde{\mathcal V}_1=\{V_x:x\in X\}$
and take an open cover $\widetilde{\mathcal V}$ of $T$ consisting of convex sets
such that
$\widetilde{\mathcal V}$ is a star refinement of $\widetilde{\mathcal V}_1$.
Let $\mathcal V=\{\widetilde V\cap X:\widetilde V\in\widetilde{\mathcal V}\}$.

Now, let $\varphi,\phi:S(Y;G)\to S(X;G)$ be two correct $\mathcal V$-close chain morphisms
and $D:S(A;G)\to S(X;G)$ be a $\mathcal V$-small chain homotopy
between $\varphi|S(A;G)$ and $\phi|S(A;G)$,
where $Y$ is a space and $A\subset Y$ is closed.

Everywhere below we say that a set $B\subset T$ is $\widetilde{\mathcal V}_1$-small
if $B$ is contained in some element of $\widetilde{\mathcal V}_1$.

 \smallskip
\textit{Claim $1$.
For every singular simplex $\sigma\in S(Y;G)$ there is a non-empty convex
$\widetilde{\mathcal V}_1$-small set $\Lambda_\sigma\subset T$ such that:
\begin{itemize}
\item $\Lambda_\tau\subset\Lambda_\sigma$ and
$|\varphi(\tau)|\cup |\phi(\tau)|\subset\Lambda_\sigma$ for all faces $\tau$ of $\sigma$;
\item If $\sigma\in S(A;G)$, then $\Lambda_\sigma$ contains also $|D(\tau)|$,
$\tau$ is a face of $\sigma$.
\end{itemize}}

Indeed,
since $\varphi$ and $\phi$ are $\mathcal V$-close,
for every singular simplex $\sigma\in S(Y;G)$
there exists $\widetilde V_\sigma\in\widetilde{\mathcal V}$ such that
$|\varphi(\tau)|\cup |\phi(\tau)|\subset\widetilde V_\sigma$ for all singular simplexes $\tau\in S(Y;G)$
which are faces of $\sigma$.
In particular,
$\widetilde V_\sigma$ contains the non-empty sets $|\varphi(v)|\cup |\phi(v)|$,
$v$ is a vertex of $\sigma$
(recall that $|\varphi(v)|$ and $|\phi(v)|$ are both non-empty because $\varphi$ and $\phi$ are correct).
Similarly, using that $D$ is $\mathcal V$-small,
for any $\sigma\in S(A;G)$
we can find $\widetilde V_\sigma^D\in\widetilde{\mathcal V}$ containing all
$|D(\tau)|\cup |\varphi(v)|\cup|\phi(v)|$,
where $\tau$ is a face of $\sigma$ and $v$ is a vertex of $\sigma$.
On the other hand,
$|\varphi(v)|\cup |\phi(v)|\subset\widetilde V_\sigma$.
Therefore, $\widetilde V_\sigma\cap\widetilde V_\sigma^D\neq\varnothing$ for any singular simplex
$\sigma\in S(A;G)$. 
Next, for any singular simplex $\sigma\in S(Y;G)$ let
$$
\Gamma_\sigma=\bigcap\{\widetilde V_s\in\widetilde{\mathcal V}:\sigma\hbox{~}
\mbox{is a face of a singular simplex}\hbox{~}s\in S(Y;G)\}
$$
and
$$
\Gamma_\sigma^D=\bigcap\{\widetilde V_s^D\in\widetilde{\mathcal V}:s\in S(A;G)\hbox{~}
\mbox{is a simplex and}\hbox{~}\sigma\hbox{~}\mbox{is a face of}\hbox{~}s\}.
$$
Obviously,
$\Gamma_\sigma\neq\varnothing$ for any $\sigma\in S(Y;G)$,
and $\Gamma_\sigma^D\neq\varnothing$ provided $\sigma\in S(A;G)$.
We assume that $\Gamma_\sigma^D=\varnothing$ if $\sigma\not\in S(A;G)$.
Consider the sets
$\Omega_\sigma=\Gamma_\sigma\bigcup\{\Gamma_\tau^D:\tau\hbox{~}
\mbox{is a face of}\hbox{~}\sigma\}$,
$\sigma\in S(Y;G)$.
Observe that if a singular simplex $\tau\in S(A;G)$ is a face of some $\sigma\in S(Y;G)$,
then $\Gamma_\sigma\cap\Gamma_\tau^D\neq\varnothing$
because this set contains all $|\varphi(v)|\cup|\phi(v)|$ with $v$ being a vertex of $\tau$.
Moreover, each of the sets $\Gamma_\sigma$ and $\Gamma_\tau^D$ are
$\widetilde{\mathcal V}$-small sets. So,
$\Omega_\sigma\subset\rm{St}(\Gamma_\sigma,\widetilde{\mathcal V})$,
$\sigma\in S(Y;G)$,
where $\rm{St}(\Gamma_\sigma,\widetilde{\mathcal V})$ denotes
the star of the set $\Gamma_\sigma$ with respect to $\widetilde{\mathcal V}$.
Thus,
$\Omega_\sigma$ is a $\widetilde{\mathcal V}_1$-small subset of $T$
(recall that $\widetilde{\mathcal V}$ is a star refinement of $\widetilde{\mathcal V}_1$).
This implies that the convex hull $\Lambda_\sigma$ of $\Omega_\sigma$ is a convex non-empty
$\widetilde{\mathcal V}_1$-small subset of $T$.
It follows from the definitions of $\Gamma_\sigma$ and $\Gamma_\sigma^D$
that $\Omega_\sigma$ contains $\Omega_\tau$ for all faces $\tau$ of $\sigma$,
so $\Lambda_\tau\subset\Lambda_\sigma$.
Because for any singular simplex $\sigma\in S(Y;G)$ (resp., $\sigma\in S(A;G)$) and
a face $\tau$ of $\sigma$ the set  $\Gamma_\sigma$ contains $|\varphi(\tau)|\cup |\phi(\tau)|$
(resp., $\Gamma_\sigma^D$ contains $|D(\tau)|$),
the sets $\Lambda_\sigma$ have the same property.
This completes the proof of Claim 1.

We are going to construct first a chain homotopy
$\widetilde D:S(Y;G)\to S(T,\widetilde{\mathcal V}_1;G)$ between $\varphi$ and $\phi$
such that $\widetilde D(c)=D(c)$ for all $c\in S(A;G)$.
To this end, observe that
both $\varphi(\sigma)$ and $\phi(\sigma)$ are singular $0$-simplexes in $S(X;G)$
for any singular $0$-simplex $\sigma\in S(Y;G)$.
Then $\varphi(\sigma)-\phi(\sigma)$ is a $0$-cycle in $S_0(\Lambda_\sigma;G)$.
So, there is a singular $1$-chain $c^1_\sigma\in S_1(\Lambda_\sigma;G)$
with $\partial c^1_\sigma=\varphi(\sigma)-\phi(\sigma)$
(recall that $H_0(\Lambda_\sigma;G)=0$ because $\Lambda_\sigma$ is a convex set).
We define $D_0'(\sigma)=c^1_\sigma$ if $\sigma\not\in S_0(A;G)$ and
$D_0'(\sigma)=D_0(\sigma)$ if $\sigma\in S_0(A;G)$.
Thus,
$|D_0'(\sigma)|\subset\Lambda_\sigma$ for all singular $0$-simplexes $\sigma$ of $S(Y;G)$.
So, we can extend $D_0'$ linearly to a homomorphism
$\widetilde D_0:S_0(Y;G)\to S_1(T,\widetilde{\mathcal V}_1;G)$.

Assume that homomorphisms $\widetilde D_k:S_k(Y;G)\to S_{k+1}(T,\widetilde{\mathcal V}_1;G)$
were defined for all $k\leq m$ such that:
\begin{itemize}
\item[(1)] $\partial\widetilde D_k(c^k)=\varphi(c^k)-\phi(c^k)-\widetilde D_{k-1}(\partial c^k)$
for all $c^k\in S_k(Y;G)$;
\item[(2)] $\widetilde D_k(c^k)=D_k(c^k)$ for all $c^k\in S(A;G)$;
\item[(3)] $|\widetilde D_i(\tau)|\subset\Lambda_\sigma$
for any singular $k$-simplex $\sigma\in S_{k}(Y;G)$ and
any singular $i$-simplex $\tau$, which is a face of $\sigma$.
\end{itemize}

To define $\widetilde D_{m+1}$,
let $\sigma$ be a singular $(m+1)$-simplex in $S_{m+1}(Y;G)$.
Then $|\varphi(\sigma)|\cup |\phi(\sigma)|\subset\Lambda_\sigma$ and,
according to $(3)$,
$\Lambda_\sigma$ contains also $|\widetilde D_{m}(\partial\sigma)|$.
Therefore,
the chain $\gamma_\sigma=\varphi(\sigma)-\phi(\sigma)-\widetilde D_{m}(\partial\sigma)$
belongs to $S_{m+1}(\Lambda_\sigma;G)$.
It is easily seen that
$\gamma_\sigma$ is a cycle,
and since $\Lambda_\sigma$ is convex,
there is a chain $c^{m+2}_\sigma\in S_{m+2}(\Lambda_\sigma;G)$
with $\partial c^{m+2}_\sigma=\gamma_\sigma$.
We define $D'_{m+1}(\sigma)=c^{m+2}_\sigma$  if $\sigma\not\in S_{m+1}(A;G)$ and
$D'_{m+1}(\sigma)=D(\sigma)$ if $\sigma\in S_{m+1}(A;G)$.
According to the properties of $\Lambda_\sigma$,
we always have
$|D'_{m+1}(\sigma)|\subset\Lambda_\sigma$, $\sigma\in S_{m+1}(Y;G)$.
Therefore, we can extend $D'_{m+1}$ to a homomorphism
$\widetilde D_{m+1}:S_{m+1}(Y;G)\to S_{m+2}(T,\widetilde{\mathcal V}_1;G)$.
Obviously, $\widetilde D_{m+1}$ satisfies conditions $(1)-(3)$.
This completes the construction of $\widetilde D$.

Finally,
note that $S(T,\widetilde{\mathcal V}_1;G)\subset S(W,\mathcal W;G)$
because $T\subset W$ and $\widetilde{\mathcal V}_1$ refines $\mathcal W$.
Hence, for any $k\geq 0$ the homomorphism $\Phi_k:S_k(Y;G)\to S_{k+1}(X;G)$,
$\Phi_k=\mu_{k+1}\circ\widetilde D_k$, is well defined.
According to $(1)$
we have $\partial\Phi_k(c^k)=\varphi_k(c^k)-\phi_k(c^k)-\Phi_{k-1}(\partial c^k)$
for all $c^k\in S_k(Y;G)$.
Therefore,
$\Phi$ is a chain homotopy between $\varphi$ and $\phi$ and,
since $\mu(S(W_x;G))\subset S(U_x;G)$ for every $x\in X$, $\Phi$ is $\mathcal U$-small.
Because $D$ is $\mathcal V$-small,
$c\in S(A;G)$ implies $D(c)\in S(X;G)\cap S(T,\widetilde{\mathcal V}_1;G)$.
So, $\Phi(c)=D(c)$ for all $c\in S(A;G)$ (recall that $\mu$ is an algebraic retraction).
Thus, $\Phi$ extends $D$.
\end{proof}

If the set $A$ in Proposition 2.4 is empty, we obtain the following corollary.
\begin{cor}
Let $X$ be a metric algebraic $ANR_G$.
Then for every open cover $\mathcal U$ of $X$
there is an open cover $\mathcal V$ of $X$ refining $\mathcal U$
such that any two correct $\mathcal V$-close morphisms $\varphi,\phi:S(Y;G)\to S(X;G)$,
where $Y$ is a metric space, are $\mathcal U$-homotopic.
\end{cor}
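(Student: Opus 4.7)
The plan is to invoke Proposition 2.4 with the closed subset $A\subset Y$ chosen to be empty. Given an open cover $\mathcal U$ of $X$, let $\mathcal V$ be the refinement of $\mathcal U$ produced by Proposition 2.4. Suppose $\varphi,\phi:S(Y;G)\to S(X;G)$ are correct $\mathcal V$-close chain morphisms, where $Y$ is a metric space.

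Take $A=\varnothing$, which is a closed subset of $Y$. Then $S(A;G)=0$ is the trivial chain complex, so the restrictions $\varphi|S(A;G)$ and $\phi|S(A;G)$ are both the zero morphism, and the zero map $D:S(A;G)\to S(X;G)$ is the unique chain homotopy between them. The requirement that $D$ be $\mathcal V$-small is vacuous, since there are no non-trivial singular simplexes in $S(A;G)$ for which to verify the size condition. Hence all hypotheses of Proposition 2.4 are satisfied.

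Applying that proposition produces a $\mathcal U$-small chain homotopy $\Phi:S(Y;G)\to S(X;G)$ between $\varphi$ and $\phi$ (the extension condition $\Phi(c)=D(c)$ for $c\in S(A;G)$ is also vacuous). By definition of $\mathcal U$-homotopic morphisms, this is exactly the required conclusion. Since the argument consists solely of checking that $A=\varnothing$ is a legitimate instance of the proposition, there is no substantive obstacle; the statement is essentially a specialization rather than a new assertion, and the only thing worth remarking is the consistency of the empty case with the $\mathcal V$-smallness condition on $D$, which reduces to a vacuous quantifier.
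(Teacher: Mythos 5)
Your proposal is correct and matches the paper's own argument exactly: the paper derives Corollary 2.5 from Proposition 2.4 by taking $A=\varnothing$, just as you do. Your remarks on the vacuity of the $\mathcal V$-smallness and extension conditions for the empty set are a sound elaboration of the same specialization.
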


Proposition 2.4 and Corollary 2.5 remain true
if the singular complex $S(Y;G)$ and the space $A\subset Y$ are replaced
by a simplicial complex $C(K;G)$ and a sub-complex $L$ of $K$, respectively.

\begin{pro}
If $X$ is metric algebraic $ANR_G$,
then for any open cover $\mathcal U$ of $X$
there is an open cover $\mathcal V$ of $X$ refining $\mathcal U$ such that,
for any two correct $\mathcal V$-close chain morphisms $\varphi, \phi: C(K;G)\to S(X;G)$,
where $K$ is a simplicial complex, and
any $\mathcal V$-small chain homotopy $D:C(L;G)\to S(X;G)$
between $\varphi|C(L;G)$ and $\phi|C(L;G)$ with $L$ being a sub-complex of $K$,
there exists a chain $\mathcal U$-small homotopy $\Phi:C(K;G)\to S(X;G)$
between $\varphi$ and $\phi$ extending $D$.
\end{pro}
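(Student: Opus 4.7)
The strategy is to mimic the proof of Proposition 2.4 almost verbatim, replacing the singular chain complex $S(Y;G)$ with the oriented simplicial chain complex $C(K;G)$ and replacing the closed set $A\subset Y$ with the sub-complex $L\subset K$. The algebra (working with a convex hull inside a normed ambient space and then retracting via $\mu$) carries over because $\mu$ is defined on $S(W,\mathcal W;G)$ and only sees the values of the homotopy, not the nature of its domain.

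First I would embed $X$ as a closed subset of a normed space $E$ and apply Theorem~2.3 to fix an algebraic neighborhood retraction $\mu:S(W,\mathcal W;G)\to S(X;G)$. Exactly as in Proposition~2.4, for each $x\in X$ I choose $U_x\in\mathcal U$ and $W_x\in\mathcal W$ with $\mu(S(W_x;G))\subset S(U_x;G)$, then pick a convex open $V_x\subset W$ with $V_x\cap X\subset U_x$ and $V_x\subset W_x$. Set $T=\bigcup_xV_x$, $\widetilde{\mathcal V}_1=\{V_x\}$, take a convex star refinement $\widetilde{\mathcal V}$ of $\widetilde{\mathcal V}_1$, and let $\mathcal V=\{\widetilde V\cap X:\widetilde V\in\widetilde{\mathcal V}\}$. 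This is the required refinement of $\mathcal U$.

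Next I would prove the analogue of Claim 1: for every simplex $\sigma\in K$ construct a convex, non-empty, $\widetilde{\mathcal V}_1$-small set $\Lambda_\sigma\subset T$ that contains $|\varphi(\tau)|\cup|\phi(\tau)|$ for every face $\tau$ of $\sigma$, contains $|D(\tau)|$ for every face $\tau\in L$ when $\sigma\in L$, and satisfies $\Lambda_\tau\subset\Lambda_\sigma$. The construction is identical: for each $\sigma\in K$ pick $\widetilde V_\sigma\in\widetilde{\mathcal V}$ (and $\widetilde V_\sigma^D$ when $\sigma\in L$) witnessing $\mathcal V$-closeness, set $\Gamma_\sigma$ and $\Gamma_\sigma^D$ to be the intersections over all simplexes of $K$ (resp.\ of $L$) of which $\sigma$ is a face, form $\Omega_\sigma=\Gamma_\sigma\cup\bigcup\{\Gamma_\tau^D:\tau\text{ face of }\sigma,\ \tau\in L\}$, and let $\Lambda_\sigma$ be its convex hull. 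The non-emptiness of the relevant intersections uses correctness of $\varphi,\phi$ at the vertices of $K$, and the $\widetilde{\mathcal V}_1$-smallness of $\Omega_\sigma$ uses that $\widetilde{\mathcal V}$ star-refines $\widetilde{\mathcal V}_1$.

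Then I would construct, by induction on the dimension $m$ of simplexes of $K$, a chain homotopy $\widetilde D:C(K;G)\to S(T,\widetilde{\mathcal V}_1;G)$ between $\varphi$ and $\phi$ extending $D$, with the property $|\widetilde D_i(\tau)|\subset\Lambda_\sigma$ for every $i$-face $\tau$ of an $m$-simplex $\sigma$. At the base step each $0$-simplex $v\in K$ gives a $0$-cycle $\varphi(v)-\phi(v)\in S_0(\Lambda_v;G)$, which bounds in $S_1(\Lambda_v;G)$ by convexity (hence acyclicity) of $\Lambda_v$; on $L$ we instead copy $D$. At stage $m+1$, the chain $\gamma_\sigma=\varphi(\sigma)-\phi(\sigma)-\widetilde D_m(\partial\sigma)$ lies in $S_{m+1}(\Lambda_\sigma;G)$ and is a cycle, hence bounds in $S_{m+2}(\Lambda_\sigma;G)$; again on $L$ we set $\widetilde D_{m+1}$ equal to $D$, which is compatible because $D$ already satisfies the homotopy identity. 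Finally, since $S(T,\widetilde{\mathcal V}_1;G)\subset S(W,\mathcal W;G)$, the composition $\Phi=\mu\circ\widetilde D$ is a chain homotopy $C(K;G)\to S(X;G)$ between $\varphi$ and $\phi$; the inclusion $\mu(S(W_x;G))\subset S(U_x;G)$ yields $\mathcal U$-smallness, and because $D(c)\in S(X;G)\cap S(T,\widetilde{\mathcal V}_1;G)$ and $\mu$ is a retraction, we get $\Phi(c)=D(c)$ for all $c\in C(L;G)$.

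The only genuine issue, rather than an obstacle, is verifying that the inductive acyclicity argument really only uses convexity of $\Lambda_\sigma$ and not any feature specific to singular simplexes on $Y$; this is the case because the cycles $\gamma_\sigma$ live in $S_*(\Lambda_\sigma;G)$ regardless of the combinatorial origin of $\sigma$. Consequently, the entire proof of Proposition~2.4 transfers with purely notational changes, and no additional machinery is needed.
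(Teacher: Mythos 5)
Your proposal is correct and follows exactly the route the paper intends: the text preceding Proposition~2.6 states that it is obtained from Proposition~2.4 by replacing $S(Y;G)$ with $C(K;G)$ and $A$ with the sub-complex $L$, and your argument carries out precisely that translation. In particular you correctly observe that Claim~1, the inductive construction of $\widetilde D$ via acyclicity of the convex sets $\Lambda_\sigma$, and the final composition with the algebraic retraction $\mu$ depend only on where the chains land, not on whether the domain is a singular or a simplicial chain complex, so the transfer is purely notational.
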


\begin{cor}
Let $X$ be a metric algebraic $ANR_G$.
Then for every open cover $\mathcal U$ of $X$
there is an open cover $\mathcal V$ of $X$ refining $\mathcal U$
such that any two correct $\mathcal V$-close morphisms $\varphi,\phi:C(K;G)\to S(X;G)$,
where $K$ is a simplicial complex, are $\mathcal U$-homotopic.
\end{cor}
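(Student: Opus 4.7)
The plan is to derive Corollary 2.7 as the degenerate $L=\varnothing$ case of Proposition 2.6, in exact analogy with how Corollary 2.5 follows from Proposition 2.4 by setting $A=\varnothing$.

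Given an open cover $\mathcal U$ of $X$, I would let $\mathcal V$ be the open cover of $X$ supplied by Proposition 2.6 for the input $\mathcal U$. Given any simplicial complex $K$ and any two correct $\mathcal V$-close chain morphisms $\varphi,\phi:C(K;G)\to S(X;G)$, I would then apply Proposition 2.6 with the empty sub-complex $L=\varnothing\subset K$ and the vacuous zero chain homotopy $D:C(\varnothing;G)\to S(X;G)$. The restrictions $\varphi|C(L;G)$ and $\phi|C(L;G)$ are both the zero morphism, so $D$ trivially realizes a chain homotopy between them, and the $\mathcal V$-smallness condition on $D$ is satisfied vacuously because $C(\varnothing;G)=0$ has no simplexes.

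Proposition 2.6 then yields a $\mathcal U$-small chain homotopy $\Phi:C(K;G)\to S(X;G)$ between $\varphi$ and $\phi$, which by definition means that $\varphi$ and $\phi$ are $\mathcal U$-homotopic. The extension clause $\Phi|C(L;G)=D$ is automatic. Since the argument works for any simplicial complex $K$, the corollary follows.

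There is no genuine technical obstacle here: the statement is a purely formal specialization of Proposition 2.6, all of the heavy lifting (the convex-hull construction of the $\Lambda_\sigma$'s inside a normed-space embedding of $X$ and the composition with the neighborhood algebraic $G$-retraction $\mu$) having already been carried out in the proof of the proposition. If one prefers to avoid invoking the empty sub-complex, the same deduction works by taking $L$ to be a single vertex $v\in K$ (when $K$ is non-empty), using $\mathcal V$-closeness to join the $0$-chains $\varphi(v)$ and $\phi(v)$ by a $1$-chain inside a common member of $\mathcal V$, and then extending via Proposition 2.6; but the $L=\varnothing$ specialization is the cleanest route.
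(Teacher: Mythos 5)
Your derivation is correct and matches the paper's intended (though unwritten) argument: Corollary 2.7 is precisely the $L=\varnothing$ specialization of Proposition 2.6, exactly parallel to how Corollary 2.5 is obtained from Proposition 2.4 by setting $A=\varnothing$. The $\mathcal V$-smallness condition on the trivial homotopy $D$ is vacuous since $C(\varnothing;G)=0$, so Proposition 2.6 directly supplies the required $\mathcal U$-small chain homotopy $\Phi:C(K;G)\to S(X;G)$ between $\varphi$ and $\phi$.
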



\section{Homologically $UV^n$-maps}
 In this section we establish some properties of closed $UV^n(G)$-maps.
We start this section with the following statement,
which an analogue of the corresponding result for homotopically $UV^n$-maps,
see for example \cite{du}.

\begin{pro}
Let $f:X\to Y$ be a closed homologically $UV^n(G)$ surjection between paracompact spaces.
Then for every open cover $\mathcal U$ of $Y$
there is an open cover $\mathcal V$ of $Y$ refining $\mathcal U$ such that
any correct partial algebraic realization of $C(K;G)$ in $f^{-1}(\mathcal V)$,
where $K$ is an $(n+1)$-dimensional simplicial complex,
extends to a full algebraic realization $\phi: C(K;G)\to S(X;G)$ of $C(K;G)$ in $f^{-1}(\mathcal U)$.
\end{pro}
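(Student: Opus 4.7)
The plan is to proceed by induction on skeletal dimension, extending $\varphi$ one dimension at a time to the simplices of $K\setminus L$ and using the homological $UV^n(G)$-property of the fibers to fill in the boundary cycles that appear. Since $\dim K\le n+1$, every cycle to be killed has dimension at most $n$, which is exactly the range covered by the hypothesis on $f$.

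First I would construct a descending sequence of open covers $\mathcal U = \mathcal U_{n+1}\succ\mathcal U_n\succ\cdots\succ\mathcal U_0 = \mathcal V$ of $Y$ such that, for each $k\in\{0,\dots,n\}$ and each $V\in\mathcal U_k$, there is $V'\in\mathcal U_{k+1}$ containing $\mathrm{st}(V,\mathcal U_k)$ and making $H_k(f^{-1}(V);G)\to H_k(f^{-1}(V');G)$ the zero homomorphism. Such a refinement exists because each fiber $f^{-1}(y)$ is homologically $UV^n(G)$ in $X$: given any $W\in\mathcal U_{k+1}$ containing $y$, the neighborhood $f^{-1}(W)$ of $f^{-1}(y)$ contains a smaller neighborhood with trivial $H_k$-inclusion into $f^{-1}(W)$, and the closedness of $f$ converts this into an open set $O_y$ around $y$ in $Y$ whose preimage is already small enough. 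A paracompact (strong) star refinement of $\{O_y\}$ then serves as $\mathcal U_k$.

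Next I would run the induction on $k=0,1,\dots,n+1$, maintaining a chain morphism $\phi_k\colon C(L\cup K^{(k)};G)\to S(X;G)$ extending $\varphi$, together with an assignment $\sigma\mapsto W_\sigma^k\in\mathcal U_k$ for each $\sigma\in K$ satisfying $(\mathrm{A}_k)$: $|\phi_k(\tau)|\subset f^{-1}(W_\sigma^k)$ for every face $\tau\in L\cup K^{(k)}$ of $\sigma$, and $(\mathrm{B}_k)$: $W_\tau^k\cap W_\sigma^k\ne\varnothing$ whenever $\tau\le\sigma$ in $K$. The base case takes $\phi_0=\varphi$ and $W_\sigma^0=V_\sigma$; $(\mathrm{A}_0)$ is the partial realization hypothesis, and $(\mathrm{B}_0)$ follows from the fact that, since $\varphi$ is correct, the point $f(\varphi(v))$ lies in $V_\sigma$ for every vertex $v$ of $\sigma$, whence in $V_\tau\cap V_\sigma$ whenever $v$ is a vertex of $\tau\le\sigma$. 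For the inductive step, given a $(k+1)$-simplex $\sigma\in K\setminus L$, the chain $\phi_k(\partial\sigma)$ is a $k$-cycle in $f^{-1}(W_\sigma^k)$ by $(\mathrm{A}_k)$; the refinement supplies $V_\sigma'\in\mathcal U_{k+1}$ with $\mathrm{st}(W_\sigma^k,\mathcal U_k)\subset V_\sigma'$ and $H_k$-trivial inclusion, so one may pick $\phi_{k+1}(\sigma)\in S_{k+1}(f^{-1}(V_\sigma');G)$ with $\partial\phi_{k+1}(\sigma)=\phi_k(\partial\sigma)$. For every $\sigma\in K$ I would then choose $W_\sigma^{k+1}\in\mathcal U_{k+1}$ to contain $W_\sigma^k$ together with all $V_\tau'$ associated to $(k+1)$-faces $\tau$ of $\sigma$; condition $(\mathrm{B}_k)$ places each such $V_\tau'$ inside the $\mathcal U_k$-star of $W_\sigma^k$, and the strong star-refining property bundles the whole collection inside a single $\mathcal U_{k+1}$-element, which is taken as $W_\sigma^{k+1}$. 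Then $(\mathrm{A}_{k+1})$ and $(\mathrm{B}_{k+1})$ follow.

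The main obstacle I expect is the refinement design itself: it must simultaneously supply the $H_k$-triviality at every level and provide enough star-refining room that, at each inductive step, the filling sets $V_\tau'$ produced for the $(k+1)$-faces of a higher-dimensional simplex $\sigma$ can be amalgamated with $W_\sigma^k$ into a single element of $\mathcal U_{k+1}$. The coherence condition $(\mathrm{B})$ is the glue that makes this amalgamation possible, and tracking it through the induction is the delicate point. Once that is in place, $\phi:=\phi_{n+1}\colon C(K;G)\to S(X;G)$ is a full algebraic realization of $C(K;G)$ in $f^{-1}(\mathcal U_{n+1})=f^{-1}(\mathcal U)$ that extends $\varphi$.
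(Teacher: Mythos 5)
Your overall strategy — build a descending chain of covers $\mathcal U=\mathcal U_{n+1}\succ\cdots\succ\mathcal U_0=\mathcal V$ encoding the $H_k$-triviality of the fibres, then extend $\varphi$ skeleton by skeleton, filling $\phi_k(\partial\sigma)$ using the $UV^n(G)$-property — is exactly the paper's approach. The cover construction and the filling step are sound. The gap is in the amalgamation step that produces $W_\sigma^{k+1}$.

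You want, for \emph{every} $\sigma\in K$, a single element $W_\sigma^{k+1}\in\mathcal U_{k+1}$ containing $W_\sigma^k$ together with all $V_\tau'$ for $(k+1)$-faces $\tau\le\sigma$. You justify this by saying that $(\mathrm{B}_k)$ places each $V_\tau'$ inside the $\mathcal U_k$-star of $W_\sigma^k$. That is not what $(\mathrm{B}_k)$ gives: $W_\tau^k\cap W_\sigma^k\ne\varnothing$ only places $W_\tau^k$ inside $\mathrm{St}(W_\sigma^k,\mathcal U_k)$, whereas $V_\tau'$ is an element of the \emph{coarser} cover $\mathcal U_{k+1}$ (chosen to contain $\mathrm{St}(W_\tau^k,\mathcal U_k)$) and can stick far outside the $\mathcal U_k$-star of $W_\sigma^k$. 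More fundamentally, you are asking a single element of $\mathcal U_{k+1}$ to swallow several other elements of $\mathcal U_{k+1}$ that merely pairwise meet $W_\sigma^k$; no star-refinement hypothesis among your covers delivers that. The source of the trouble is the invariant itself: you insist on carrying $W_\sigma^k$ for \emph{all} $\sigma\in K$, so at each step you must amalgamate the filling sets of a simplex's codimension-one faces into a single $\mathcal U_{k+1}$-element, and that forces an escalation of cover levels you have not budgeted for.

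The paper sidesteps this by carrying the invariant $(4_k)$ only for $k$-simplices $\sigma$ and \emph{re-anchoring} at each inductive step using the original partial-realization data at the finest level. Concretely, for a $(k+1)$-simplex $\sigma$ it picks $U_0\in\mathcal U_0$ containing all $|\varphi(v)|$, $v\in\sigma^{(0)}$ (this exists because $\varphi$ is a partial realization in $f^{-1}(\mathcal U_0)$ and $L\supset K^{(0)}$), promotes $U_0$ to $U_0^k\in\mathcal U_k$, and observes that every $U^k_\tau$ furnished by $(4_k)$ for a $k$-face $\tau$ meets $U_0^k$, because each $U^k_\tau$ already contains some vertex image lying in $U_0\subset U_0^k$. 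Hence $\bigcup_\tau U^k_\tau\subset\mathrm{St}(U_0^k,\mathcal U_k)$, which (since $\mathcal U_k$ star-refines $\{V_{k+1}(y)\}$) lands in a single $V_{k+1}(y_\sigma)$ where the cycle can be filled inside $U_{k+1}(y_\sigma)\in\mathcal U_{k+1}$, and $U^{k+1}_\sigma:=U_{k+1}(y_\sigma)$ serves for $(4_{k+1})$. To repair your argument, replace the carried coherence condition $(\mathrm{B}_k)$ by this re-anchoring via a single element of $\mathcal U_0$ covering all vertex images of $\sigma$, and maintain $(\mathrm{A}_k)$ only for $k$-simplices.
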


\begin{proof}
Denote $\mathcal U$ by $\mathcal U_{n+1}$ and
for every $y\in Y$ fix $U_{n+1}(y)\in\mathcal U_{n+1}$ containing $y$.
We are going to construct by induction open covers $\mathcal U_k$  of $Y$ for each $k=n,n-1,..,0$
following the proof of \cite[Theorem 3.1]{du}.
Since $f$ is a closed map and each fiber $f^{-1}(y)$ is a homologically $UV^n(G)$-set in $X$,
for every $y\in Y$ there exists a neighborhood $V_{n+1}(y)$ of $y$ such that
$f^{-1}\big(V_{n+1}(y)\big)\stackrel{H_n}{\hookrightarrow}f^{-1}\big(U_{n+1}(y)\big)$.
Let $\mathcal U_n$ be an open star-refinement of the cover $\{V_{n+1}(y):y\in Y\}$.
If $\mathcal U_{k+1}$ is already defined,
we repeat the above construction to obtain for each $y\in Y$ its neighborhood $V_{k+1}(y)$
with $f^{-1}\big(V_{k+1}(y)\big)\stackrel{H_k}{\hookrightarrow}f^{-1}\big(U_{k+1}(y)\big)$,
and then take $\mathcal U_k$ to be an open star-refinement of the cover $\{V_{k+1}(y):y\in Y\}$.
We proceed recursively until construct $\mathcal U_0$.

Let show that $\mathcal V=\mathcal U_0$ is the required cover.
Suppose $K$ is an $(n+1)$-dimensional simplicial complex and
$\varphi:C(L;G)\to S(X;G)$ a correct partial algebraic realization of $C(K;G)$ in $f^{-1}(\mathcal U_0)$,
where $L$ is a sub-complex of $K$ containing all vertices of $K$.
For every $k\geq 1$ we are going to construct a homomorphism
$\phi_k:C_k(K;G)\to S_k(X;G)$ extending $\varphi_k:C_k(L;G)\to S_k(X;G)$
such that $\partial^X\circ\phi_k=\phi_{k-1}\circ\partial$ and satisfying the following condition $(4_k)$
(here, $\partial$ and $\partial^X$ are the boundary operators in $C(K;G)$ and $S(X;G)$, respectively).

\begin{enumerate}
\item[$(4_k)$] For every $k$-simplex $\sigma$ in $K$
there exist $U^k_\sigma\in\mathcal U_k$  such that
$|\phi_i(\tau)|\subset f^{-1}(U^k_\sigma)$ for any $i\leq k$ and any $i$-dimensional face $\tau$ of $\sigma$.
\end{enumerate}

To construct $\phi_1$,
let $\sigma=(v_0,v_1)$ be a $1$-simplex from $K$ and take $U_\sigma^0\in\mathcal U_0$
such that $f^{-1}(U^0_\sigma)$ contains $|\varphi(v_i)|$, $i=0,1$.
Moreover, if $\sigma\in L$ we can suppose that $f^{-1}(U^0_\sigma)$ also contains $|\varphi(\sigma)|$.
Since $\mathcal U_0$ is a star refinement of $\{V_{1}(y):y\in Y\}$,
$U_\sigma^0\subset f^{-1}(V_{1}(y_\sigma))$ for some $y_\sigma\in Y$.
So,
$\varphi(\partial\sigma)=\varphi(v_1)-\varphi(v_0)$ is a singular $0$-cycle in $f^{-1}(V_{1}(y_\sigma))$
(recall that each $\varphi(v_i)$, $i=1,2$, is
a singular $0$-simplex and $\epsilon(\varphi(v_1)-\varphi(v_0))=0$,
where $\epsilon: S_0(X;G)\to G$ is the augmentation of $S(X;G)$).
Since $f^{-1}\big(V_{1}(y_\sigma)\big)\stackrel{H_1}{\hookrightarrow}f^{-1}\big(U_{1}(y_\sigma)\big)$,
there exists a $1$-chain $c_{\sigma}^1\in f^{-1}\big(U_{1}(y_\sigma)\big)$
such that $\partial^X(c_{\sigma}^1)=\varphi(\partial\sigma)$.
We define $\phi_1'(\sigma)=c_\sigma^1$
if $\sigma\not\in L$ and $\phi'_1(\sigma)=\varphi(\sigma)$ if $\sigma\in L$,
and extend $\phi_1'$ linearly to a homomorphism $\phi_1:C_1(K;G)\to S_1(X;G)$.
Then condition $(4_1)$ is satisfied with $U^1_\sigma=U_{1}(y_\sigma)$
because $|\phi(v_0)|\cup |\phi(v_1)|\cup |\phi_1(\sigma)|\subset f^{-1}\big(U_{1}(y_\sigma)\big)$.

Suppose that the homomorphisms $\phi_i$ have been already constructed for some $k>1$ and
all $i\leq k$, and let $\sigma$ be a $(k+1)$-simplex of $K$.
Choose $U_0\in\mathcal U_0$ and $U_0^k\in\mathcal U_k$
such that $f^{-1}(U_0)$ contains all $|\varphi(v)|$ with $v\in\sigma^{(0)}$ and $U_0\subset U_0^k$.
In case $\sigma\in L$, we can suppose that $|\varphi(\sigma)|$ is also contained in $f^{-1}(U_0)$.
Then, by $(4_k)$, for every $k$-simplex $\tau$, which is a face of $\sigma$,
there exist $U^k_\tau\in\mathcal U_k$ such that
$|\phi_i(s)|\subset f^{-1}(U^k_\tau)$ for all $i$-dimensional faces $s$ of $\tau$, $i\leq k$.
In particular, $f^{-1}(U^k_\tau)$ contains $|\varphi(v)|$ for all vertexes $v$ of $\tau$.
Because $|\varphi(v)|\neq\varnothing$ for all $v\in K^{(0)}$ (recall that $\varphi$ is correct),
we have
$|\varphi(v)|\subset f^{-1}(U_0^k)\cap f^{-1}(U^k_\tau)\neq\varnothing$
for all faces $\tau$ of $\sigma$ and all $v\in\tau^{(0)}$.
Consequently,
$f^{-1}\big(\rm{St}(U_0^k,\mathcal U_k)\big)\neq\varnothing$ and
$|\phi_k(\partial(\sigma))|\subset f^{-1}\big(\rm{St}(U_0^k,\mathcal U_k)\big)$,
where $\rm{St}(U_0^k,\mathcal U_k)$ is the star of $U_0^k$ with respect to $\mathcal U_k$.
Since $\mathcal U_k$ is a star-refinement of $\{V_{k+1}(y):y\in Y\}$,
$|\phi_k(\partial(\sigma))|\subset f^{-1}(V_{k+1}(y_\sigma))$ for some $y_\sigma\in Y$.
Hence,
$\phi_k(\partial(\sigma))$ is a singular $k$-cycle in $f^{-1}(V_{k+1}(y_\sigma))$.
Finally,
since $f^{-1}\big(V_{k+1}(y_\sigma)\big)\stackrel{H_k}{\hookrightarrow}f^{-1}\big(U_{k+1}(y_\sigma)\big)$,
there exists a $(k+1)$-chain  $c_{\sigma}^{k+1}\in f^{-1}\big(U_{k+1}(y_\sigma)\big)$
such that $\partial^X(c_{\sigma}^{k+1})=\phi_k(\partial(\sigma))$.
We define $\phi_{k+1}'(\sigma)=c_{\sigma}^{k+1}$
if $\sigma\not\in L$  and $\phi_{k+1}'(\sigma)=\varphi(\sigma)$ if $\sigma\in L$,
and then extend $\phi_{k+1}'$ to a homomorphism
$\phi_{k+1}:C_{k+1}(K;G)\to S_{k+1}(X;G)$ by linearity.
Condition $(4_{k+1})$ is satisfied with $U^{k+1}_\sigma=U_{k+1}(y_\sigma)$.

In this way we complete the inductive step.
The required full algebraic realization of $C(K;G)$ in $f^{-1}(\mathcal U)$ is
the chain morphism $\phi:C(K;G)\to S(X;G)$ with $\phi=\{\phi_k\}_{k=0}^{n+1}$.
\end{proof}

Proposition 3.1 implies the following \lq\lq approximate lifting" of chain morphisms,
see \cite[Theorem 16.7]{da} for the homotopical version.

\begin{cor}
Let $f:X\to Y$ be as Proposition $3.1$.
Then every open cover $\mathcal U$ of $Y$ has an open refinement $\mathcal V$ covering $Y$
such that:
If $K$ is an $(n+1)$-dimensional simplicial complex,
$L$ its sub-complex  and $\varphi_L:C(L;G)\to S(X;G)$, $\phi:C(K;G)\to S(Y;G)$ are two correct chain morphisms
such that $\phi|C(L;G)=f_\sharp\circ\varphi_L$ and $\phi$ is $\mathcal V$-small,
then  there is a chain morphism $\varphi_K:C(K;G)\to S(X;G)$ extending $\varphi_L$ with $\phi$
and $f_\sharp\circ\varphi_K$ being $\mathcal U$-close.
\end{cor}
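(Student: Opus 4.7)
The plan is to derive Corollary~3.2 from Proposition~3.1 by a three-cover construction. First I would fix an open star-refinement $\widehat{\mathcal U}$ of $\mathcal U$, apply Proposition~3.1 to $\widehat{\mathcal U}$ to obtain an open cover $\widehat{\mathcal V}$ of $Y$, and then let $\mathcal V$ be a common open refinement of $\widehat{\mathcal V}$ and $\widehat{\mathcal U}$ (for instance the cover of all non-empty intersections $V'\cap V''$ with $V'\in\widehat{\mathcal V}$ and $V''\in\widehat{\mathcal U}$).

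Given data $K$, $L$, $\varphi_L$, $\phi$ satisfying the hypotheses for this $\mathcal V$, the next step is to enlarge $\varphi_L$ so that its domain contains every vertex of $K$, as required in order to invoke Proposition~3.1. For each $v\in K^{(0)}\setminus L$ the chain $\phi(v)$ is a singular $0$-simplex of $Y$, and since $f$ is surjective I would pick $x_v\in f^{-1}(\phi(v))$ and set $\widetilde\varphi_L(v)$ equal to the $0$-simplex of $X$ carried by $x_v$; on $L$ keep $\widetilde\varphi_L=\varphi_L$. Putting $L^+=L\cup K^{(0)}$, this gives a correct chain morphism $\widetilde\varphi_L:C(L^+;G)\to S(X;G)$ still satisfying $\phi|C(L^+;G)=f_\sharp\circ\widetilde\varphi_L$ (adding isolated vertices introduces no new boundary relations). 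Because $\phi$ is $\mathcal V$-small and $\mathcal V$ refines $\widehat{\mathcal V}$, for every $\sigma\in K$ some $\widehat V_\sigma\in\widehat{\mathcal V}$ contains $|\phi(\tau)|$ for all faces $\tau$ of $\sigma$; for $\tau\in L^+$ this forces $|\widetilde\varphi_L(\tau)|\subset f^{-1}(\widehat V_\sigma)$. Thus $\widetilde\varphi_L$ is a correct partial algebraic realization of $C(K;G)$ in $f^{-1}(\widehat{\mathcal V})$, and Proposition~3.1 produces a chain morphism $\varphi_K:C(K;G)\to S(X;G)$ extending $\widetilde\varphi_L$ (hence $\varphi_L$) which is a full algebraic realization in $f^{-1}(\widehat{\mathcal U})$: for every $\sigma\in K$ there is $\widehat U_\sigma\in\widehat{\mathcal U}$ with $|\varphi_K(\tau)|\subset f^{-1}(\widehat U_\sigma)$ for every face $\tau$ of $\sigma$.

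Finally I would verify that $\phi$ and $f_\sharp\circ\varphi_K$ are $\mathcal U$-close. Fix $\sigma\in K$. The previous step yields $|f_\sharp\varphi_K(\tau)|\subset\widehat U_\sigma$ for every face $\tau$, while $\mathcal V$-smallness of $\phi$ supplies $V_\sigma\in\mathcal V$ containing every $|\phi(\tau)|$; by the refinement relation $V_\sigma\subset\widehat W_\sigma$ for some $\widehat W_\sigma\in\widehat{\mathcal U}$. For any vertex $v$ of $\sigma$, the identity $f_\sharp\varphi_K(v)=f_\sharp\widetilde\varphi_L(v)=\phi(v)$ shows that $|\phi(v)|$ lies in both $\widehat W_\sigma$ and $\widehat U_\sigma$, so these two sets meet; the star-refinement property of $\widehat{\mathcal U}$ over $\mathcal U$ then furnishes $U_\sigma\in\mathcal U$ with $\widehat W_\sigma\cup\widehat U_\sigma\subset U_\sigma$, whence $|\phi(\tau)|\cup|f_\sharp\varphi_K(\tau)|\subset U_\sigma$ for every face $\tau$ of $\sigma$. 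The only genuine subtlety here is the vertex adjustment of the first step: one must extend $\varphi_L$ across the vertices of $K\setminus L$ in such a way that $f_\sharp\widetilde\varphi_L(v)=\phi(v)$ holds, because these common vertex values are precisely what anchor $\widehat W_\sigma$ and $\widehat U_\sigma$ inside a single star of $\widehat{\mathcal U}$.
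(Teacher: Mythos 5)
Your proposal is correct and follows essentially the same route as the paper: pass to a star-refinement $\widehat{\mathcal U}$ of $\mathcal U$, take the Proposition~3.1 refinement of it, extend $\varphi_L$ across the vertices of $K\setminus L$ by lifting the carrier points of $\phi(v)$ through $f$, apply Proposition~3.1 to get $\varphi_K$, and finish by noting that $|\phi(v)|$ sits in both the star-cover element coming from $\phi$ and the one coming from $\varphi_K$, so a star argument places both images inside one $U\in\mathcal U$. The only cosmetic difference is your extra ``common refinement'' step defining $\mathcal V$, which is redundant since the cover supplied by Proposition~3.1 already refines $\widehat{\mathcal U}$; otherwise the two proofs coincide.
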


\begin{proof}
Let $\mathcal U$ be an open cover of $Y$ and choose $\mathcal U_1$ to be a star-refinement of $\mathcal U$.
Then, there exists a corresponding refinement
$\mathcal V$ of $\mathcal U_1$ satisfying Proposition 3.1.
Let $\phi:C(K;G)\to S(Y;G)$ and $\varphi_L:C(L;G)\to S(X;G)$ be chain morphisms
satisfying the hypotheses of the corollary.
We can suppose that $L$ contains all vertexes of $K$.
Indeed, otherwise for any $v\in K^{(0)}\setminus L$ let $\phi(v)=g\cdot\sigma^0$ with $g\in G$ and
$\sigma^0$ being a singular $0$-simplex in $Y$.
Then $|\sigma^0|$ is a point in $Y$, and define $\varphi_L(v)=g\cdot\tau^0$,
where $\tau^0$ is a singular $0$-simplex in $X$ with $f(|\tau^0|)=|\sigma^0|$.

Because $\phi|C(L;G)=f_\sharp\circ\varphi_L$ and $\phi$ is  $\mathcal V$-small,
$\varphi_L$ is a correct partial algebraic realization of $C(K;G)$ in $f^{-1}(\mathcal V)$.
So, $\varphi_L$ extends to a full algebraic realization $\varphi_K:C(K;G)\to S(X;G)$ of $C(K;G)$
in $f^{-1}(\mathcal U_1)$.
Hence, for every $\sigma\in K$ there exists $U'_\sigma\in\mathcal U_1$
such that  $|\varphi_K(\tau)|\subset f^{-1}(U'_\sigma)$ for all faces $\tau$ of $\sigma$.
On the other hand,
since $\phi$ is $\mathcal V$-small, there exists $V_\sigma\in\mathcal V$  such that
$|\phi(v)|\neq\varnothing$ and $|\phi(v)|\cup |\phi(\tau)|\subset V_\sigma$,
$\tau$ is a face of $\sigma$ and $v\in\sigma^{(0)}$.
Hence, for all $v\in\sigma^{(0)}$ we have
$|\phi(v)|\subset V_\sigma\cap U'_\sigma$.
So, $\rm{St}(V_\sigma;\mathcal U_1)\neq\varnothing$ and
it contains $|\phi(\tau)|\cup f(|\varphi_K(\tau)|)$ for all faces $\tau$ of $\sigma$.
Finally, since $\mathcal V$ refines $\mathcal U_1$ and $\mathcal U_1$ is a star refinement of $\mathcal U$,
there is $U_\sigma\in\mathcal U$ containing all $|\phi(\tau)|\cup f(|\varphi_K(\tau)|)$,
$\tau$ being a face of $\sigma$.
Therefore, $\phi$ is $\mathcal U$-close to $f_\sharp\circ\varphi_K$.
\end{proof}

Let $K$ be a singular sub-complex of $S(P;G)$,
$X$ be a space and $\mathcal U$ an open cover of $X$.
If $L$ is a sub-complex of $K$  containing all $0$-singular simplexes of $K$,
we say that a chain morphism $\varphi:L\to S(X;G)$ is
a {\em partial singular realization} of $K$ in $\mathcal U$
if for every singular simplex $\sigma\in K$ there exists $U_\sigma\in\mathcal U$
such that $|\varphi(\tau)|\subset U_\sigma$ for all faces $\tau$ of $\sigma$ with $\tau\in L$.
If $L=K$, then $\varphi$ is called a {\em full singular realization of $K$ in $\mathcal U$}.
If in the above definition $\varphi(\sigma)$ is a singular $0$-simplex in $S(X;G)$
for every singular $0$-simplex $\sigma$ of $K$,
then $\varphi$ is said to be a {\em correct partial singular realization} of $K$ in $\mathcal U$.

The proof of Proposition 3.1 and Corollary 3.2 remain true
if $\varphi$ is a correct partial singular realization of $K=S^{(n+1)}(P;G)$ in $f^{-1}(\mathcal V)$.
In this case $\varphi$ extends to a full singular realization of $K$ in $f^{-1}(\mathcal U)$.
So, we have the following \lq\lq singular analogues" of Proposition 3.1 and Corollary 3.2.

\begin{pro}
Let $f:X\to Y$ be as in Proposition $3.1$.
Then for every open cover $\mathcal U$ of $Y$
there is an open cover $\mathcal V$ of $Y$ refining $\mathcal U$ such that
any correct partial singular realization of a singular complex $S^{(n+1)}(P;G)$ in $f^{-1}(\mathcal V)$
extends to a full singular realization of $S^{(n+1)}(P;G)$ in $f^{-1}(\mathcal U)$.
\end{pro}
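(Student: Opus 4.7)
The plan is to imitate the proof of Proposition 3.1 almost verbatim, replacing the role played by the simplicial complex $K$ with the singular $(n+1)$-skeleton $S^{(n+1)}(P;G)$, and the role played by the oriented chains on $K$ with the singular chains on $P$. All the structural ingredients needed for the original argument, namely a grading by dimension, face-sub-simplexes, and a boundary operator, are available for singular complexes, so the inductive construction carries through unchanged.

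Concretely, I would first build exactly the same descending tower of open covers. Setting $\mathcal U_{n+1}=\mathcal U$, for each $y\in Y$ choose $U_{n+1}(y)\in\mathcal U_{n+1}$ containing $y$; using that $f^{-1}(y)$ is homologically $UV^n(G)$ and $f$ is closed, pick a neighborhood $V_{n+1}(y)$ with $f^{-1}(V_{n+1}(y))\stackrel{H_n}{\hookrightarrow}f^{-1}(U_{n+1}(y))$, and take $\mathcal U_n$ to be a star-refinement of $\{V_{n+1}(y):y\in Y\}$. Iterate down to $\mathcal U_0$ and declare $\mathcal V=\mathcal U_0$. Given a correct partial singular realization $\varphi:L\to S(X;G)$ of $S^{(n+1)}(P;G)$ in $f^{-1}(\mathcal V)$, where $L$ is a sub-complex containing all singular $0$-simplexes of $P$, I then construct homomorphisms $\phi_k:S_k(P;G)\to S_k(X;G)$ for $k=1,\dots,n+1$, each extending $\varphi_k$ on $L\cap S_k(P;G)$, commuting with boundary, and satisfying the star-condition $(4_k)$ applied to singular simplexes $\sigma\in S_k(P;G)$: there is $U^k_\sigma\in\mathcal U_k$ with $|\phi_i(\tau)|\subset f^{-1}(U^k_\sigma)$ for every face $\tau$ of $\sigma$ and all $i\le k$.

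The inductive step is formally identical to the one in Proposition 3.1. For a singular $(k+1)$-simplex $\sigma:\triangle^{k+1}\to P$ not lying in $L$, combine the covers of the faces furnished by $(4_k)$ with the nonempty sets $|\varphi(v)|$ for the vertices $v$ of $\sigma$ (nonempty by correctness) to see that $|\phi_k(\partial\sigma)|$ is contained in the star of some $U_0^k\in\mathcal U_k$, hence in some $f^{-1}(V_{k+1}(y_\sigma))$ by star-refinement. Since $\phi_k(\partial\sigma)$ is a $k$-cycle there and the inclusion into $f^{-1}(U_{k+1}(y_\sigma))$ kills $H_k$, choose a filling $(k+1)$-chain $c_\sigma^{k+1}\in S_{k+1}(f^{-1}(U_{k+1}(y_\sigma));G)$ and set $\phi_{k+1}(\sigma)=c_\sigma^{k+1}$; set $\phi_{k+1}(\sigma)=\varphi(\sigma)$ if $\sigma\in L$. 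Extending linearly over $S_{k+1}(P;G)$ produces the next level, and $(4_{k+1})$ holds with $U^{k+1}_\sigma=U_{k+1}(y_\sigma)$.

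The main conceptual check, and the only genuine difference from the simplicial setting, is that $\phi_k$ is defined freely on each singular simplex and extended $G$-linearly: this is consistent because the boundary of a singular simplex is a formal $\mathbb Z$-linear combination of its face maps, so $\phi_k(\partial\sigma)$ is well defined from the previously constructed values on faces, and compatibility with $\varphi$ on $L$ is automatic since $L$ is closed under faces. The collection $\phi=\{\phi_k\}_{k=0}^{n+1}$ (with $\phi_0=\varphi_0$, as $L$ contains all $0$-singular simplexes) is then the desired full singular realization of $S^{(n+1)}(P;G)$ in $f^{-1}(\mathcal U)$ extending $\varphi$.
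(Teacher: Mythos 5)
Your proposal is correct and takes exactly the route the paper intends: the authors simply remark that the proof of Proposition 3.1 ``remains true'' when $C(K;G)$ is replaced by the singular $(n{+}1)$-skeleton $S^{(n+1)}(P;G)$, and you have carried out that replacement faithfully, building the same descending tower of star-refining covers and running the same inductive filling argument on singular simplexes. Your added observation that free $G$-linear extension over singular simplexes is even cleaner than in the oriented simplicial case (no orientation identifications to respect) is accurate and harmless.
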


\begin{cor}
Let $f:X\to Y$ be as Proposition $3.1$.
Then every open cover $\mathcal U$ of $Y$ has an open refinement $\mathcal V$ covering $Y$
such that:
If $S^{(n+1)}(P;G)$ is an $(n+1)$-dimensional singular complex,
$L$ its sub-complex  and $\varphi:L\to S(X;G)$,
$\phi:S^{(n+1)}(P;G)\to S(Y;G)$ are two correct chain morphisms
such that $\phi|L=f_\sharp\circ\varphi$ and $\phi$ is $\mathcal V$-small,
then there is a chain morphism $\widetilde{\varphi}:S^{(n+1)}(P;G)\to S(X;G)$ extending $\varphi$
with $\phi$ and $f_\sharp\circ\widetilde{\varphi}$ being $\mathcal U$-close.
\end{cor}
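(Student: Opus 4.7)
The plan is to mimic the proof of Corollary 3.2, but use Proposition 3.4 (the singular analogue of Proposition 3.1) in place of Proposition 3.1. Given $\mathcal U$, I would first choose a star-refinement $\mathcal U_1$ of $\mathcal U$ and then invoke Proposition 3.4 to find an open cover $\mathcal V$ refining $\mathcal U_1$ such that any correct partial singular realization of $S^{(n+1)}(P;G)$ in $f^{-1}(\mathcal V)$ extends to a full singular realization in $f^{-1}(\mathcal U_1)$. This $\mathcal V$ will be the cover we advertise.

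Next, I would arrange for $\varphi$ to be defined on every singular $0$-simplex of $S^{(n+1)}(P;G)$. If $\sigma^0$ is a $0$-simplex not in $L$, then because $\phi$ is correct, $\phi(\sigma^0)=g\cdot\rho^0$ for some $g\in G$ and some singular $0$-simplex $\rho^0$ in $Y$ whose carrier is a single point $y\in Y$. Choosing any $x\in f^{-1}(y)$ and letting $\tau^0$ be the singular $0$-simplex $\triangle^0\to X$ with image $\{x\}$, I define $\varphi(\sigma^0)=g\cdot\tau^0$; this forces $f_\sharp\circ\varphi=\phi$ on the enlarged sub-complex and keeps $\varphi$ correct.

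With this arranged, $\varphi$ is a correct partial singular realization of $S^{(n+1)}(P;G)$ in $f^{-1}(\mathcal V)$, because the identity $\phi|L=f_\sharp\circ\varphi$ together with $\mathcal V$-smallness of $\phi$ puts each $|\varphi(\tau)|$ inside $f^{-1}(V_\sigma)$ for an appropriate $V_\sigma\in\mathcal V$. Proposition 3.4 then supplies the required extension $\widetilde\varphi:S^{(n+1)}(P;G)\to S(X;G)$, which is a full singular realization in $f^{-1}(\mathcal U_1)$: for each singular simplex $\sigma$ there is $U'_\sigma\in\mathcal U_1$ with $|\widetilde\varphi(\tau)|\subset f^{-1}(U'_\sigma)$ for all faces $\tau$ of $\sigma$.

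Finally, to check that $\phi$ and $f_\sharp\circ\widetilde\varphi$ are $\mathcal U$-close, I would run the same star-refinement argument as in the proof of Corollary 3.2. For each singular simplex $\sigma$, pick $V_\sigma\in\mathcal V$ containing $|\phi(\tau)|$ for all faces $\tau$, and some $U''_\sigma\in\mathcal U_1$ with $V_\sigma\subset U''_\sigma$. For any vertex $v$ of $\sigma$ the (non-empty) set $|\phi(v)|=f(|\widetilde\varphi(v)|)$ lies in both $U''_\sigma$ and $U'_\sigma$, so these two members of $\mathcal U_1$ meet; since $\mathcal U_1$ star-refines $\mathcal U$, there exists $U_\sigma\in\mathcal U$ containing $U'_\sigma\cup U''_\sigma$, hence containing $|\phi(\tau)|\cup f(|\widetilde\varphi(\tau)|)$ for every face $\tau$ of $\sigma$. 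The only real subtlety is the vertex extension step; after that, the proof is essentially verbatim the argument for Corollary 3.2, so I do not expect a serious obstacle.
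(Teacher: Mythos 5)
Your proposal is correct and is essentially the paper's own argument: the paper explicitly states that Corollary 3.4 is obtained by running the proof of Corollary 3.2 with Proposition 3.3 (the singular analogue of Proposition 3.1) in place of Proposition 3.1, which is exactly what you do. The only slip is a numbering one---you cite ``Proposition 3.4'' where you mean Proposition 3.3; the statement being proved is the corollary numbered 3.4.
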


Another application of Proposition 3.3 is
the following chain analogue of Dugundji's extension theorem for $LC^n$-spaces \cite{du1}.
For that reason we introduce the following notion:
a chain morphism $\varphi: S(Z;G)\to S(Y;G)$ is said to be {\em continuous}
if for every $z\in Z$ and any neighborhood $U$ of $|\varphi(z)|$ in $Y$
there is a neighborhood $V$ of $z$ in $Z$ with $|\varphi(z')|\subset U$ for all $z'\in V$
(here, $z$ and $z'$ are treated as singular $0$-simplexes in $S(Z;G)$).
For example,
if $f:Z\to Y$ is a continuous map, then the chain morphism $f_\sharp:S(Z;G)\to S(Y;G)$ is continuous.

\begin{pro}
Let $f:X\to Y$ be as in Proposition $3.1$.
Then each open cover $\mathcal U$ of $Y$ admits an open refinement $\mathcal V$ with the property:
If $A$ is a closed subset of a metric space $M$ and
$\varphi:S(A;G)\to S(X;G)$ is a continuous correct $f^{-1}(\mathcal V)$-small chain morphism,
then there exist a neighborhood $W$ of $A$ in $M$, an open cover $\omega$ of $W$ and
a $f^{-1}(\mathcal U)$-small chain morphism $\widetilde{\phi}:S^{(n+1)}(W,\omega;G)\to S(X;G)$
such that
$\widetilde{\phi}(c)=\varphi(c)$ for all $c\in S^{(n+1)}(W,\omega;G)\cap S(A;G)$.
\end{pro}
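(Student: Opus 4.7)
The plan is to produce $\widetilde{\phi}$ as a full singular realization supplied by Proposition~3.3, built from a carefully chosen partial singular realization coming from a Dugundji-type covering of $M\setminus A$. First I would fix the refinements: apply Proposition~3.3 to the cover $\mathcal U$ of $Y$ to obtain an open cover $\mathcal U_1$ of $Y$ such that every correct partial singular realization of an $(n+1)$-dimensional singular complex in $f^{-1}(\mathcal U_1)$ extends to a full singular realization in $f^{-1}(\mathcal U)$; then let $\mathcal V$ be a star-refinement of $\mathcal U_1$. This $\mathcal V$ is the cover claimed by the proposition.

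For the geometric setup, combine the Dugundji construction with the continuity of $\varphi$. For each $a\in A$ pick $V_a\in\mathcal V$ containing $f(|\varphi(a)|)$ and, using continuity of $\varphi$, an open neighborhood $G_a\subset M$ of $a$ such that $|\varphi(a')|\subset f^{-1}(V_a)$ for every $a'\in G_a\cap A$. On $M\setminus A$ take the canonical locally finite Dugundji cover $\{U_\lambda\}_{\lambda\in\Lambda}$ by balls $U_\lambda=B(x_\lambda,r_\lambda)$ with $r_\lambda$ comparable to $d(x_\lambda,A)$, and fix an assignment $\lambda\mapsto a_\lambda\in A$ satisfying $d(a_\lambda,U_\lambda)\le 2\,d(U_\lambda,A)$. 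Choose a neighborhood $W$ of $A$ in $M$ small enough that $U_\lambda\cap W\ne\varnothing$ forces $U_\lambda\cup\{a_\lambda\}\subset G_{a_\lambda}$; this is possible because $r_\lambda\to 0$ as $U_\lambda$ approaches $A$. Finally let $\omega$ be a locally finite open refinement in $W$ of the cover $\{G_a\cap W:a\in A\}\cup\{U_\lambda:U_\lambda\cap W\ne\varnothing\}$.

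Next, construct the partial realization. Let $L$ be the sub-complex of $S^{(n+1)}(W,\omega;G)$ generated by $S(A;G)\cap S^{(n+1)}(W,\omega;G)$ together with every singular $0$-simplex of $S^{(n+1)}(W,\omega;G)$. Define $\widetilde{\phi}_0\colon L\to S(X;G)$ by setting $\widetilde{\phi}_0=\varphi$ on the first piece, and, for each singular $0$-simplex $z$ with $|z|\in W\setminus A$, by choosing any $\lambda(z)$ with $|z|\in U_{\lambda(z)}$ and putting $\widetilde{\phi}_0(z)=\varphi(a_{\lambda(z)})$. The key claim to verify is that $\widetilde{\phi}_0$ is a correct partial singular realization of $S^{(n+1)}(W,\omega;G)$ in $f^{-1}(\mathcal U_1)$. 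Once that is established, Proposition~3.3 supplies the desired $f^{-1}(\mathcal U)$-small full singular realization $\widetilde{\phi}$ extending $\widetilde{\phi}_0$, and in particular extending $\varphi$ on the common domain $S^{(n+1)}(W,\omega;G)\cap S(A;G)$.

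The hard part will be the smallness verification. For an $\omega$-small simplex $\sigma$ with carrier inside some $G_a\cap W$, the continuity clause together with the $f^{-1}(\mathcal V)$-smallness of $\varphi$ places all the values $\widetilde{\phi}_0(\tau)$, $\tau\in L$ a face of $\sigma$, inside $f^{-1}(V_a)$. For an $\omega$-small $\sigma$ with carrier inside a Dugundji patch $U_\lambda$, the nearness estimate $d(a_{\lambda(v)},U_{\lambda(v)})\le 2\,d(U_{\lambda(v)},A)$, combined with the inclusion $U_\lambda\subset G_{a_\lambda}$ arranged in the previous paragraph, forces every assigned base point $a_{\lambda(v)}$ to lie inside $G_{a_\lambda}$, so again the carriers $|\varphi(a_{\lambda(v)})|$ all sit inside a common $f^{-1}(V)$ for some $V\in\mathcal V$. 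The star-refinement property of $\mathcal V$ over $\mathcal U_1$ then collects all these $V$'s into a single element of $\mathcal U_1$, which is exactly the $f^{-1}(\mathcal U_1)$-smallness needed to invoke Proposition~3.3.
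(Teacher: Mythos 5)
Your overall strategy coincides with the paper's: build a correct partial singular realization on the sub-complex $L=S_0(W;G)\cup\big(S^{(n+1)}(W,\omega;G)\cap S(A;G)\big)$ by transporting vertices to $A$ along a Dugundji cover of $M\setminus A$, and then invoke Proposition~3.3 to extend to a full realization that is $f^{-1}(\mathcal U)$-small. The choice of refinements ($\mathcal U_1$ from Proposition~3.3, $\mathcal V$ a star-refinement of $\mathcal U_1$) and the definition of the partial realization on $0$-simplexes are all correct and the same as in the paper. The gap is in the geometric construction of $W$ and $\omega$.

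You ask to choose a \emph{single} neighborhood $W$ of $A$ so small that $U_\lambda\cap W\neq\varnothing$ forces $U_\lambda\cup\{a_\lambda\}\subset G_{a_\lambda}$, with the justification that $r_\lambda\to 0$ as $U_\lambda$ approaches $A$. This cannot be achieved in general. The neighborhood $G_a$ comes from the continuity of $\varphi$ and has a radius $\varepsilon_a>0$ that depends on $a$; since $A$ is merely a closed subset of a metric space, these moduli need not be bounded away from $0$. Along a sequence $a_k\in A$ with $\varepsilon_{a_k}\to 0$, any fixed neighborhood $W$ of $A$ contains Dugundji patches near $a_k$ that do not fit in $G_{a_k}$, no matter how small $W$ is. The quantity you need to control is local in $a$, so the cover $\omega$ must be too; a uniform shrinking of $W$ does not give it.

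This is exactly what the paper's use of the \emph{canonical} property of the Dugundji cover $\alpha$ supplies and what your proposal never invokes: for every $a\in A$ and every neighborhood $O(a)$ there is a smaller neighborhood $\Gamma(a)$ with $\mathrm{St}(\Gamma(a),\alpha)\subset O(a)$. Taking $O(a)=B_\rho(a,\varepsilon_a/3)$ and setting $\omega=\{\Gamma(a):a\in A\}$, $W=\bigcup_a\Gamma(a)$, one obtains the paper's Claim~2: if a Dugundji patch $\Lambda$ meets $\Gamma(a)$, then $\Lambda\subset B_\rho(a,\varepsilon_a/3)$, hence $a_\Lambda\in B_\rho(a,\varepsilon_a)$ and $|\varphi(a_\Lambda)|\subset f^{-1}(V_a)$. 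That estimate is precisely what your smallness verification (for $\omega$-small simplexes with vertices off $A$) requires, and it is obtained by making $\omega$ adapt to the varying scale $\varepsilon_a$, not by shrinking $W$ uniformly. With that modification the rest of your argument goes through as planned.
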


\begin{proof}
Every open cover $\mathcal U$ of $Y$ has a refinement $\mathcal V'$ satisfying Proposition 3.3,
and let $\mathcal V$ be an open star refinement of $\mathcal V'$.
Suppose $\varphi:S(A;G)\to S(X;G)$ is a continuous correct $f^{-1}(\mathcal V)$-small morphism,
where $A$ is a closed subset of a metric space $(M,\rho)$.
According to \cite{du1}, there is a locally finite canonical open cover $\alpha$ of $M\setminus A$.
This means that for every $a\in A$ and its neighborhood $O(a)$ in $M$
there exists another neighborhood $\Gamma(a)$ in $M$
such that $\rm{St}\big(\Gamma(a),\alpha\big)\subset O(a)$.
For every $\Lambda\in\alpha$ we choose a point
$a_\Lambda\in A$ with
\begin{enumerate}
\item[(5)] $\rho(a_\Lambda,\Lambda)<2\sup_{z\in \Lambda}\rho(z,A).$
\end{enumerate}
Since $\varphi$ is correct and continuous,
for every $a\in A$ we find $V_a\in\mathcal V$ and $\varepsilon_a>0$
with $|\varphi(z)|\subset f^{-1}(V_a)$ for all $z\in A\cap B_\rho(a,\varepsilon_a)$,
where $B_\rho(a,\varepsilon_a)$ is the open ball in $M$ with radius $\varepsilon_a$ and center $a$.
Using the notations above,
let $\Gamma(a)$ be the corresponding to $O(a)=B_\rho(a,\varepsilon_a/3)$ neighborhood of $a$,
and let $W=\bigcup_{a\in A}\Gamma(a)$ and $\omega=\{\Gamma(a):a\in A\}$.

 \smallskip
\textit{Claim $2$. If $\Lambda\cap\Gamma(a)\neq\varnothing$ for some $\Lambda\in\alpha$ and
$a\in A$, then $a_\Lambda\in B_\rho(a,\varepsilon_a)$.}

Indeed, for any such $\Lambda$ we have $\Lambda\subset B_\rho(a,\varepsilon_a/3)$.
Consequently, $2\sup_{z\in\Lambda}\rho(z,A)<2\varepsilon_a/3$ and, according to $(5)$,
there is $z_\Lambda\in \Lambda$ with $\rho(a_\Lambda,z_\Lambda)<2\varepsilon_a/3$.
So, $\rho(a_\Lambda,a)\leq\rho(a_\Lambda,z_\Lambda)+\rho(z_\Lambda,a)<\varepsilon_a$.

Obviously, $L=S_0(W;G)\cup S^{(n+1)}(A,\omega;G)$ is
a sub-complex of the complex $S^{(n+1)}(W,\omega;G)$
containing all singular $0$-simplexes in $W$.
For any singular $0$-simplex $z\in S_0(W;G)$
we define $\phi_0'(z)=\varphi(z)$ if $z\in A$ and
$\phi'(z)=\varphi(a_{\Lambda(z)})$ if $z\in W\setminus A$,
where $\Lambda(z)$ is an arbitrary element of $\alpha$ containing $z$.
Next,
extend $\phi'_0$ to a homomorphism $\phi_0:S_0(W;G)\to S(X;G)$ by linearity.
Obviously, $\phi_0$ can be extended to a homomorphism $\phi:L\to S(X;G)$
by defining $\phi(\sigma)=\varphi(\sigma)$ for all $\sigma\in S^{(n+1)}(A,\omega;G)\setminus S_0(W;G)$.

 \smallskip
\textit{Claim $3$.
$\phi$ is a correct partial singular realization of $S^{(n+1)}(W,\omega;G)$ in $f^{-1}(\mathcal V')$.}

The correctness of $\phi$ follows from the correctness of $\varphi$.
To show that $\phi$ is a partial singular realization of $S^{(n+1)}(W,\omega;G)$
in $f^{-1}(\mathcal V')$,
let $\sigma\in S^{(n+1)}(W,\omega;G)$ be a singular simplex and $z$ its vertex.
Then $|\sigma|\subset\Gamma(a)\in\omega$ for some $a\in A$,
so  $z$ is a point from $\Gamma(a)$ (we identify $z$ with $|z|$).
Since
$\Gamma(a)\subset B_\rho(a,\varepsilon_a/3)$, according to Claim 2 and the choice of $\varepsilon_a$,
we have $|\phi(z)|=|\varphi(a_{\Lambda(z)})|\subset f^{-1}(V_a)$ if $z\in W\setminus A$.
The inclusion $|\phi(z)|\subset f^{-1}(V_a)$ holds also if $z\in A$.
Therefore,
$|\phi(z)|\subset f^{-1}(V_a)$ for all vertices $z$ of $\sigma$.
Take $V_\sigma\in\mathcal V'$ with $\rm{St}(V_a,\mathcal V)\subset V_\sigma$
(recall that $\mathcal V$ is a star refinement of $\mathcal V'$)
and let $\tau\in L$ be a face of $\sigma$.
If $\tau$ is $0$-simplex, then $|\phi(\tau)|$ (being a subset of $f^{-1}(V_a)$) is contained
in $f^{-1}(V_\sigma)$.
If $\tau$ is a singular simplex of dimension $\geq 1$,
then $\tau\in S^{(n+1)}(A,\omega;G)$.
Because $\varphi$ is $f^{-1}(\mathcal V)$-small, there is $V_\tau\in\mathcal V$ such that
$|\varphi(z)|\cup |\varphi(\tau)|\subset f^{-1}(V_\tau)$ for all vertices $z$ of $\tau$.
So, $|\varphi(z)|\subset f^{-1}(V_a)\cap f^{-1}(V_\tau)$ for any vertex $z$ of $\tau$.
Consequently, $\rm{St}(V_a,\mathcal V)\neq\varnothing$ and contains $|\phi(\tau)|$.
Hence,
$|\phi(\tau)|\subset f^{-1}(V_\sigma)$ for all faces $\tau$ of $\sigma$.
Thus,
$\phi$ is a correct partial singular realization of $S^{(n+1)}(W,\omega;G)$ in $f^{-1}(\mathcal V')$.

Finally, by Proposition 3.3,
$\phi$ extends to a full singular realization $\widetilde{\phi}$ of $S^{(n+1)}(W,\omega;G)$ in $f^{-1}(\mathcal U)$.
Therefore, $\widetilde{\phi}$ is $f^{-1}(\mathcal U)$-small and $\widetilde{\phi}(c)=\varphi(c)$
for any $c\in S^{(n+1)}(W,\omega;G)\cap S(A;G)$.
\end{proof}

If in Proposition 3.5 $\dim M\leq n+1$,
then we have the following \lq\lq approximate extension" version of Proposition 3.5

\begin{pro}
Suppose $f$ is as in Proposition $3.1$.
Then for every open cover $\mathcal U$ of $Y$
there exists an open cover $\mathcal V$ of $Y$ with the following property:
If $\varphi:S(A;G)\to S(X;G)$ is a continuous correct $f^{-1}(\mathcal V)$-small chain morphism,  where $A$ is a closed subset of a metric space $M$ with $\dim M\leq n+1$,
then there exist an open set $W\subset M$ containing $A$, an open cover $\alpha$ of $W$
and a correct chain morphism
$\phi:S(W,\alpha;G)\to S(X;G)$ such that
$\phi|S(A,\alpha;G)$ and $\varphi|S(A,\alpha;G)$ are $f^{-1}(\mathcal U)$-close.
\end{pro}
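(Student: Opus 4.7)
The plan is to combine Proposition 3.5 (which gives a ``truncated'' extension up to dimension $n+1$) with a nerve-based construction that exploits the hypothesis $\dim M \leq n+1$ to obtain a full chain morphism on $S(W,\alpha;G)$.

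First, I would fix nested open refinements of $\mathcal U$. Let $\mathcal U_1$ be a star-refinement of $\mathcal U$, let $\mathcal V'$ be the open cover of $Y$ supplied by Proposition 3.5 applied to $\mathcal U_1$, and take $\mathcal V$ to be a further star-refinement of $\mathcal V'$. Given the continuous correct $f^{-1}(\mathcal V)$-small morphism $\varphi\colon S(A;G)\to S(X;G)$, Proposition 3.5 produces an open neighborhood $W_0\supseteq A$ in $M$, an open cover $\omega$ of $W_0$, and an $f^{-1}(\mathcal U_1)$-small chain morphism $\widetilde\phi\colon S^{(n+1)}(W_0,\omega;G)\to S(X;G)$ with $\widetilde\phi(c)=\varphi(c)$ for every $c\in S^{(n+1)}(W_0,\omega;G)\cap S(A;G)$. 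This handles the part of the extension in dimensions $\leq n+1$.

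Next, I exploit the dimensional hypothesis. Since $\dim M\leq n+1$, after possibly shrinking $W_0$ to a smaller neighborhood $W$ of $A$, the cover $\omega|_W$ admits an open refinement $\alpha$ of $W$ of order at most $n+1$. Consequently, the nerve $N(\alpha)$ is a simplicial complex with $\dim N(\alpha)\leq n+1$, and a partition of unity subordinate to $\alpha$ yields a canonical continuous map $p\colon W\to |N(\alpha)|$ satisfying $p^{-1}(\operatorname{st}(v_U))\subseteq U$ for every vertex $v_U$ of $N(\alpha)$ indexed by $U\in\alpha$.

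Finally, I would build $\phi$ by factoring through $N(\alpha)$. For each vertex $v_U$ of $N(\alpha)$, pick $a_U\in\overline U$, chosen in $\overline U\cap A$ whenever that set is nonempty, and regard the assignment $v_U\mapsto\widetilde\phi(a_U)$ as a correct partial algebraic realization of $C(N(\alpha);G)$ in $f^{-1}$ of a cover governed by $\mathcal V'$. Because $\dim N(\alpha)\leq n+1$, Proposition 3.1 extends it to a full algebraic realization $\eta\colon C(N(\alpha);G)\to S(X;G)$ in $f^{-1}(\mathcal U)$. The desired morphism $\phi\colon S(W,\alpha;G)\to S(X;G)$ is then the composition of $p_\sharp$, a standard chain map from singular chains on $|N(\alpha)|$ to the oriented chain complex $C(N(\alpha);G)$ (produced by the acyclic-model construction on the simplicial complex $N(\alpha)$), and $\eta$. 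The main obstacle is verifying the closeness condition on $S(A,\alpha;G)$: one needs $\phi$ and $\varphi$ to be $f^{-1}(\mathcal U)$-close on chains supported in $A$. This follows because on such chains the nerve-approximation component lands near the vertex points $a_U\in A$, at which $\eta$ equals $\widetilde\phi=\varphi$, and the two star-refinements chosen at the outset absorb the bounded drift accumulated by passing through the simplicial approximation and the nerve projection. The hypothesis $\dim M\leq n+1$ is indispensable precisely because it forces $\dim N(\alpha)\leq n+1$, placing $N(\alpha)$ in the dimensional range to which Proposition 3.1 directly applies.
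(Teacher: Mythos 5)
Your overall strategy is the same as the paper's: build the nerve of a fine cover (of dimension $\leq n+1$ thanks to $\dim M\leq n+1$), extend a vertex assignment to a full algebraic realization $\eta$ of the nerve via Proposition 3.1, and obtain $\phi$ by composing $\eta$ with a canonical map into the nerve followed by a chain map from singular to oriented chains (the paper uses the Hilton--Wylie equivalence $\gamma:S(|K|,\mathcal S;G)\to C^{\Omega}(K;G)$ followed by $\theta:C^{\Omega}(K;G)\to C(K;G)$, which is exactly your ``acyclic-model'' chain map; note that this forces $\alpha$ to be refined by $p^{-1}(\mathcal S)$, a point you omit but which is routine). The final closeness verification you sketch is also the one the paper carries out.

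There is, however, a genuine gap in how you seed the partial algebraic realization. You define the vertex values as $\widetilde\phi(a_U)$ with $a_U\in\overline U$, allowing $a_U\notin A$ when $\overline U\cap A=\varnothing$. For Proposition 3.1 to apply you must check that whenever $U_0,\dots,U_m\in\alpha$ span a simplex of the nerve, all the values $|\widetilde\phi(a_{U_i})|$ lie in a single element of $f^{-1}(\mathcal V_1)$ (the cover demanded by Proposition 3.1), and this must hold for \emph{every} simplex of $N(\alpha)$, including those supported far from $A$. But the only control you have on $\widetilde\phi$ away from $A$ is that it is $f^{-1}(\mathcal U_1)$-small as a realization of $S^{(n+1)}(W_0,\omega;G)$: this constrains $\widetilde\phi$ on the faces of a \emph{single} singular simplex, and says nothing about two distinct singular $0$-simplexes that merely happen to lie in a common element of $\omega$. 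So the required smallness of the vertex assignment cannot be verified, and the detour through Proposition 3.5 (which is not needed for the conclusion anyway, since $\phi$ is not required to extend $\varphi$) is precisely what creates the problem. The paper's remedy is to keep all base points inside $A$: it first takes a locally finite cover $\Gamma=\{\Lambda_t\}$ of $A$ refining the cover $\{O_a\}$ supplied by the continuity and correctness of $\varphi$ (so that $|\varphi(z)|\subset f^{-1}(V_a)$ for $z\in O_a$), with nerve of dimension $\leq n+1$, then extends each $\Lambda_t$ to an open $\widetilde\Lambda_t\subset M$ so that the intersection pattern --- hence the nerve --- is unchanged, sets $W=\bigcup\widetilde\Lambda_t$, and defines the vertex values as $\varphi(a(t))$ with $a(t)\in\Lambda_t\subset A$. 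Then the nonemptiness of $\bigcap\Lambda_{t_i}$ for a simplex of the nerve produces a common point of $A$ witnessing that the corresponding $f^{-1}(V_i)$ all meet, and the star-refinement $\mathcal V\prec^{*}\mathcal V_1$ gives the required single element of $f^{-1}(\mathcal V_1)$. If you replace your vertex assignment by this one, the rest of your argument goes through.
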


\begin{proof}
Let $\mathcal U$ be an open cover of $Y$ and
$\mathcal U_1$ be a star refinement of $\mathcal U$.
Take another open cover $\mathcal V_1$ of $Y$ satisfying the hypotheses of Proposition 3.1
with respect to $\mathcal U_1$
(i.e., any correct partial algebraic realization of $C(K;G)$ in $f^{-1}(\mathcal V_1)$
can be extended to a full algebraic realization of $C(K;G)$ in $f^{-1}(\mathcal U_1)$,
where $K$ is a simplicial complex with $\dim K\leq n+1$).
Let $\mathcal V$ be a locally finite open star-refinement of $\mathcal V_1$.
Since $\varphi$ is continuous and correct,
for every $a\in A$ there are $V_a\in\mathcal V$ and a neighborhood $O_a$ of $a$ in $A$
with $|\varphi(z)|\subset f^{-1}(V_a)$ for any $z\in O_a$.
Take a locally finite open cover $\Gamma=\{\Lambda_t:t\in T\}$ of $A$
refining the cover $\{O_a:a\in A\}$
such that the nerve of $\Gamma$ is of dimension $\leq n+1$.
Because $M$ is a metric space,
we can extend each $\Lambda_t\in\Gamma$ to an open set $\widetilde\Lambda_t\subset M$
such that for any finitely many $\Lambda_{t_1},..,\Lambda_{t_k}$
we have $\bigcap\widetilde\Lambda_{t_i}\neq\varnothing$
if and only if $\bigcap\Lambda_{t_i}\neq\varnothing$.
The last relation implies that the nerve $K$ of
$\widetilde{\Gamma}=\{\widetilde\Lambda_{t}:t\in T\}$ is also of dimension $\leq n+1$.
Let $W=\bigcup\{\widetilde\Lambda_{t}:t\in T\}$.
For every
$\widetilde\Lambda_{t}\in\widetilde{\Gamma}$ choose a point $a(t)\in\Lambda_t$ and
define $\psi_0'(\widetilde{\Lambda_t})=\varphi(a(t))$,
where
$\widetilde{\Lambda_t}$ is considered as a vertex of $K$ and $a(t)$
as a singular $0$-simplex from $S(A;G)$.
Then extend $\psi_0'$ to a homomorphism $\psi_0:C_0(K;G)\to S_0(X;G)$.

Since $\mathcal V$ is a star-refinement of $\mathcal V_1$,
$\psi_0$ is a correct partial algebraic realization of $C(K;G)$ in $\mathcal V_1$.
Indeed suppose
$\sigma=(\widetilde\Lambda_{t_0},..,\widetilde\Lambda_{t_m})$ is a simplex from $K$.
Then for every $i=0,..,m$ there is $V_i\in\mathcal V$ such that
$|\varphi(a)|\in f^{-1}(V_i)$ for all $a\in\Lambda_{t_i}$.
So, $f^{-1}(V_0)\cap f^{-1}(V_i)\neq\varnothing$ and
$|\varphi(a(t_i))|\subset f^{-1}(V_i)$, $i=0,..,m$. Hence, $\rm{St}(f^{-1}(V_0),f^{-1}(\mathcal V))$
contains $\bigcup|\varphi(a(t_i))|$.
Consequently, $\bigcup|\varphi(a(t_i))|\subset f^{-1}(V')$ for some $V'\in\mathcal V_1$
(recall that $\mathcal V$ is a star-refinement of $\mathcal V_1$).
Thus, $\psi_0$ is a correct partial algebraic realization of $C(K;G)$ in $\mathcal V_1$.

So, $\psi_0$ extends to a full algebraic realization
$\psi:C(K;G)\to S(X;G)$ of $C(K;G)$ in $f^{-1}(\mathcal U_1)$.
Let $\kappa: W\to |K|$ be a canonical map,
where the polytope $|K|$ is equipped with the Whitehead topology.
According to \cite[Proposition 8.6.6]{hw},
there are an open cover $\mathcal S$ of $|K|$ such that each $|s|$, $s\in K$,
is contained in some $P_s\in\mathcal S$,
and a chain equivalence $\gamma:S(|K|,\mathcal S;G)\to C^\Omega(K;G)$.
Here $C^\Omega(K;G)$ is the chain complex whose simplexes are finite arrays
$[\widetilde\Lambda_0,\widetilde\Lambda_1,..,\widetilde\Lambda_k]$,
where all $\widetilde\Lambda_i$,
not necessarily distinct,
are vertices of $K$ spanning a simplex from $K$.
There exists also a natural chain morphism $\theta:C^\Omega(K;G)\to C(K;G)$ such that
$\theta([\widetilde\Lambda_0,\widetilde\Lambda_1,..,\widetilde\Lambda_k])$
is the simplex
$(\widetilde\Lambda_0,\widetilde\Lambda_1,..,\widetilde\Lambda_k)\in K$
if all $\widetilde\Lambda_i$ are distinct, and $0$ otherwise.
Let $\alpha$ be the intersection of the covers $\widetilde\Gamma$ and
$\kappa^{-1}(\mathcal S)$,
and let $\phi_1:S(W,\alpha;G)\to C(K;G)$ and
$\phi:S(W,\alpha;G)\to C(K;G)$ be the chain morphisms
$\phi_1=\theta\circ\gamma\circ\kappa_\sharp$ and $\phi=\psi\circ\phi_1$, respectively.

Let show that $\varphi|S(A,\alpha;G)$ and $\phi|S(A,\alpha;G)$ are $f^{-1}(\mathcal U)$-close. Indeed,
since $\varphi$ is $f^{-1}(\mathcal V)$-small,
for any singular simplex $\sigma\in S(A,\alpha;G)$ there is $V_\sigma\in\mathcal V$
with $|\varphi(\tau)|\subset f^{-1}(V_\sigma)$ for all faces $\tau$ of $\sigma$.
On the other hand,
$\sigma_1=\kappa_\sharp(\sigma)$ is a singular simplex from $S(|K|,\mathcal S;G)$
such that, according to the definition of $\gamma$ (see \cite[pp. 339]{hw}),
$\gamma(\sigma_1)$ is a \lq\lq simplex"
$s=[\widetilde\Lambda_0,\widetilde\Lambda_1,..,\widetilde\Lambda_k]$ from $C^\Omega(K;G)$
satisfying the following condition:
if $\tau$ is a face of $\sigma$,
then $\kappa_\sharp(\tau)$ is a face of $\sigma_1$ and
the vertices of $\gamma(\kappa_\sharp(\tau))$ are also
vertices of $\gamma(\kappa_\sharp(\sigma))$.
In particular, for any vertex $v$ of $\sigma$ we have
$\gamma(\kappa_\sharp(v))=\gamma(\kappa_\sharp(|v|))$
is one of the vertexes $\widetilde\Lambda_i$
such that $|v|$ is a point from $\Lambda_i$.
So, for every face $\tau$ of $\sigma$ either $\phi_1(\tau)=0$ or
$\phi_1(\tau)$ is a simplex from $K$ whose vertices are contained in the set
$\{\widetilde\Lambda_i;i=0,1,..k\}$,
but definitely the union of all $\phi(\tau)$,
$\tau$ is a face of $\sigma$, is non-empty.
Hence,
there exists a simplex $\delta\in K$  containing $\phi_1(\tau)$ for all faces $\tau$ of $\sigma$
such that the vertices of $\delta$ are in the set $\{\widetilde\Lambda_i;i=0,1,..k\}$.
Since $\psi$ is $f^{-1}(\mathcal U_1)$-small,
we can find $U_\delta\in\mathcal U_1$ containing all $|\phi(\tau)|\subset f^{-1}(U_\delta)$,
$\tau$ is a face of $\sigma$.
We fix a vertex $v^*$ of $\sigma$.
Then $\phi_1(v^*)=\widetilde\Lambda_j$ for some $j$ with $|v^*|\in\Lambda_j$,
and let $V_a\in\mathcal V$ such that $|\varphi(z)|\in f^{-1}(V_a)$ for all $z\in\Lambda_j$.
So, according to the definition of $\psi$, $\phi(v^*)=\psi(\widetilde\Lambda_j)=\varphi(z^*)$
for some $z^*\in\Lambda_j$.
Consequently,
$|\phi(v^*)|=|\varphi(z^*)|\in f^{-1}(V_a)$.
Hence, $|\phi(v^*)|\in f^{-1}(V_a)\cap f^{-1}(U_\sigma)$ and
$|\varphi(v^*)|\in f^{-1}(V_a)\cap f^{-1}(V_\sigma)$.
Therefore, since $\mathcal V$ is refining $\mathcal U_1$, for all faces $\tau$ of $\sigma$
we have
$$
|\varphi(\tau)|\cup |\phi(\tau)|\subset f^{-1}(V_\sigma)\cup f^{-1}(U_\sigma)\subset
\rm{St}(f^{-1}(V_a),f^{-1}(\mathcal U_1)).
$$

\noindent
Because  $\rm{St}(f^{-1}(V_a),f^{-1}(\mathcal U_1))$ is contained in $f^{-1}(U)$
for some $U\in\mathcal U$,
we finally obtain that $\varphi|S(A,\alpha;G)$ and $\phi|S(A,\alpha;G)$ are
$f^{-1}(\mathcal U)$-close.
\end{proof}

Next proposition is an analogue of Proposition 2.4.

\begin{pro}
Let $f:X\to Y$ be as in Proposition $3.1$, $Z$ an arbitrary space and $A\subset Z$ a closed subset. Then for every open cover $\mathcal U$ of $Y$
there exists an open refinement $\mathcal V$ of $\mathcal U$ such that
for any two correct $f^{-1}(\mathcal V)$-close chain morphisms
$\varphi,\phi:S^{(n)}(Z;G)\to S(X;G)$
and any $f^{-1}(\mathcal V)$-small chain homotopy
$\Phi:S(A;G)\to S(X;G)$ between $\varphi|S^{(n)}(A;G)$ and $\phi|S^{(n)}(A;G)$
there exists a  $f^{-1}(\mathcal U)$-small homotopy
$D:S^{(n)}(Z;G)\to S(X;G)$ between $\varphi$ and $\phi$ extending $\Phi$;
\end{pro}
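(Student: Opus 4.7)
The plan is to mirror the inductive construction from the proof of Proposition 3.1, but now for a chain homotopy rather than a realization: build $D$ one dimension at a time, using the homological $UV^n(G)$ property of the fibers of $f$ to fill cycles by chains of controlled diameter, and use the prescribed homotopy $\Phi$ on those simplexes that lie in $A$.

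\emph{Choice of covers.} I would begin from $\mathcal U_{n+1}=\mathcal U$ and construct a decreasing sequence $\mathcal U_n,\mathcal U_{n-1},\ldots,\mathcal U_0$ exactly as in Proposition 3.1: for each $y\in Y$ pick $U_{k+1}(y)\in\mathcal U_{k+1}$ containing $y$, then use that $f^{-1}(y)$ is homologically $UV^n(G)$ to get a neighborhood $V_{k+1}(y)$ of $y$ with $f^{-1}(V_{k+1}(y))\stackrel{H_k}{\hookrightarrow}f^{-1}(U_{k+1}(y))$ trivial, and let $\mathcal U_k$ be an open star-refinement of $\{V_{k+1}(y):y\in Y\}$. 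Then $\mathcal V=\mathcal U_0$ is a candidate for the required cover.

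\emph{Inductive construction.} I would build homomorphisms $D_k:S_k(Z;G)\to S_{k+1}(X;G)$ for $k=0,1,\ldots,n$ satisfying, for every singular $k$-simplex $\sigma$: \emph{(i)} $\partial^X D_k(\sigma)=\varphi(\sigma)-\phi(\sigma)-D_{k-1}(\partial\sigma)$; \emph{(ii)} $D_k(\sigma)=\Phi(\sigma)$ whenever $\sigma\in S_k(A;G)$; and \emph{(iii)} there exists $U^k_\sigma\in\mathcal U_k$ with $|\varphi(\tau)|\cup|\phi(\tau)|\cup|D_i(\tau)|\subset f^{-1}(U^k_\sigma)$ for every $i$-dimensional face $\tau$ of $\sigma$, $i\leq k$. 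For $k=0$ and $\sigma\notin S_0(A;G)$, correctness of $\varphi,\phi$ makes $\varphi(\sigma)-\phi(\sigma)$ a $0$-cycle, which $f^{-1}(\mathcal V)$-closeness confines to some $f^{-1}(V_1(y_\sigma))$; by the $UV^n(G)$ condition it bounds a $1$-chain in $f^{-1}(U_1(y_\sigma))$, which I take as $D_0(\sigma)$. On $S_0(A;G)$ I set $D_0=\Phi_0$. For the inductive step at a $(k+1)$-simplex $\sigma\notin S_{k+1}(A;G)$, the chain $\gamma_\sigma=\varphi(\sigma)-\phi(\sigma)-D_k(\partial\sigma)$ is a $(k+1)$-cycle in $X$. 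Combining (iii) with the $\mathcal V$-closeness of $\varphi,\phi$ and the $\mathcal V$-smallness of $\Phi$ on the faces of $\sigma$ lying in $A$, one argues as in Proposition 3.1 that $|\gamma_\sigma|\subset f^{-1}(\mathrm{St}(U_0^{k+1},\mathcal U_k))\subset f^{-1}(V_{k+2}(y_\sigma))$ for suitable $U_0^{k+1}\in\mathcal U_0$ and $y_\sigma\in Y$. The $UV^n(G)$ hypothesis then produces a $(k+2)$-chain $c^{k+2}_\sigma\in f^{-1}(U_{k+2}(y_\sigma))$ with $\partial c^{k+2}_\sigma=\gamma_\sigma$, and I would set $D_{k+1}(\sigma)=c^{k+2}_\sigma$ (and $D_{k+1}(\sigma)=\Phi(\sigma)$ when $\sigma\in S_{k+1}(A;G)$), extending linearly. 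Property (iii) for $D_{k+1}$ holds with $U^{k+1}_\sigma=U_{k+2}(y_\sigma)$, and at level $k=n$ we have $U^n_\sigma\in\mathcal U_n\subset\mathcal U=\mathcal U_{n+1}$, so the resulting $D=\{D_k\}_{k=0}^n$ is $f^{-1}(\mathcal U)$-small.

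\emph{Main obstacle.} The delicate point is the smallness bookkeeping when $\partial\sigma$ has some faces inside $A$ and some outside: then $D_k(\partial\sigma)$ is a mixture of values produced by rule (ii) (prescribed by $\Phi$) and by the $UV^n(G)$-filling in the inductive step. One must verify that this combined chain still sits over a single star element of $\mathcal U_k$. This works because $\Phi$ is $f^{-1}(\mathcal V)$-small and the inductively constructed pieces satisfy (iii), so that all the relevant carriers lie in fibers over elements of $\mathcal U_0\subset\mathcal U_k$ which share the non-empty vertex images $|\varphi(v)|\cap|\phi(v)|$; the star-refinement passage from $\mathcal U_k$ to $\{V_{k+1}(y)\}$ then absorbs everything into a single $V_{k+2}(y_\sigma)$, exactly as in Proposition 3.1. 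Once this bookkeeping is in place, the proof of Proposition 3.7 is essentially a rerun of the argument given for Proposition 3.1 with the homotopy equation replacing the realization equation.
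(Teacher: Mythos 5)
Your overall strategy matches the paper's: the authors also re-run the Proposition 3.1 induction for a chain homotopy (following Bowers' Lemma 5.4), constructing $D_k$ subject to the homotopy identity, the condition $D_k|S(A;G)=\Phi$, and a ``carriers over a single cover element'' bookkeeping condition, then fill the cycle $\gamma_\sigma=\varphi(\sigma)-\phi(\sigma)-D_k(\partial\sigma)$ at each step by the $UV^n(G)$ hypothesis. The authors use a slightly different cover bookkeeping: they build pairs $\mathcal U_k,\mathcal V_k$ for $k=n,\dots,0$, with $\mathcal U_n=\mathcal U$, with $\mathcal U_{k-1}$ star-refining $\mathcal V_k$, with the $H_k$-triviality linking $\mathcal V_k$ to $\mathcal U_k$, and with a special star-enlarged condition $\rm{St}(f^{-1}(V),f^{-1}(\mathcal V_0))\stackrel{H_0}{\hookrightarrow}f^{-1}(U)$ at the base $k=0$; then $\mathcal V=\mathcal V_0$. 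This extra star at $k=0$ is exactly how they combine the $\Phi$-prescribed value at $0$-simplexes in $A$ with the freshly filled chains at $0$-simplexes off $A$ so that condition $(8)$ holds uniformly for $D_0$ — the point you flag as the ``main obstacle''. Your single-sequence cover construction (copying Proposition 3.1 with $\mathcal U_{n+1}=\mathcal U$ at the top and $\mathcal V=\mathcal U_0$ at the bottom) buys the same slack because $\mathcal U_0$ is already a star-refinement of $\{V_1(y)\}$, so the final covers differ only by one refinement level; both approaches work.

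There is, however, an indexing slip in your inductive invariant (iii): a chain homotopy fills a $(k+1)$-cycle when it defines $D_{k+1}$, which is one dimension higher than the $k$-cycle filled for $\phi_{k+1}$ in Proposition 3.1. So with your cover indexing, the chain $D_{k+1}(\sigma)$ lands in $f^{-1}\bigl(U_{k+2}(y_\sigma)\bigr)$ with $U_{k+2}(y_\sigma)\in\mathcal U_{k+2}$, and condition (iii) should read $U^k_\sigma\in\mathcal U_{k+1}$ rather than $U^k_\sigma\in\mathcal U_k$; your own later line ``$U^{k+1}_\sigma=U_{k+2}(y_\sigma)$'' then produces $U^n_\sigma\in\mathcal U_{n+1}=\mathcal U$ at the top, which is the desired $f^{-1}(\mathcal U)$-smallness. (The subsequent phrase ``$U^n_\sigma\in\mathcal U_n\subset\mathcal U$'' is inconsistent with this and should be dropped.) The authors avoid the shift by taking $\mathcal U_n=\mathcal U$ instead of $\mathcal U_{n+1}=\mathcal U$, so that their $(8)$ with $U_\sigma\in\mathcal U_k$ lands correctly. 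After this correction your proposal is equivalent in substance to the paper's argument.
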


\begin{proof}
We follow the proof of \cite[Lemma 5.4]{bo}.
As in the proof of Proposition 3.1, for every $k=n,n-1,..,0$
we construct open covers $\mathcal U_k$ and $\mathcal V_k$ of $Y$
such that $\mathcal U=\mathcal U_n$, $\mathcal U_k$ star-refines $\mathcal V_{k+1}$
for $k=n-1,..,0$ and for each $V\in\mathcal V_k$
there exists
$U\in\mathcal U_k$ with $f^{-1}(V)\stackrel{H_k}{\hookrightarrow}f^{-1}(U)$ if $k=n,..,1$
and $\rm{St}(f^{-1}(V),f^{-1}(\mathcal V_0))\stackrel{H_0}{\hookrightarrow}f^{-1}(U)$ if $k=0$.
We claim that $\mathcal V=\mathcal V_0$ is the required cover.
Indeed,
suppose $\varphi,\phi:S^{(n)}(Z;G)\to S(X;G)$ are two $f^{-1}(\mathcal V)$-close correct chain
morphisms and $\Phi:S(A;G)\to S(X;G)$ is
a $f^{-1}(\mathcal V)$-small chain homotopy between
$\varphi|S^{(n)}(A;G)$ and $\phi|S^{(n)}(A;G)$.
We are going to construct
homomorphisms $D_k:S_k(Z;G)\to S_{k+1}(X;G)$ such that:

\begin{itemize}
\item[(6)] $\partial D_k(c^k)=\varphi(c^k)-\phi(c^k)-D_{k-1}(\partial c^k)$
for all $c^k\in S_k(Z;G)$;
\item[(7)] $D_k(c^k)=\Phi(c^k)$ for all $c^k\in S(A;G)$;
\item[(8)] For any singular $k$-simplex $\sigma\in S_{k}(Z;G)$
there is $U_\sigma\in\mathcal U_k$ such that $f^{-1}(U_\sigma)$ contains
$|D_i(\tau)|\cup|\phi(\tau)|$ for all $i\leq k$ and all $i$-dimensional faces $\tau$ of $\sigma$.
\end{itemize}

\noindent
Because $\varphi$ and $\phi$ are $f^{-1}(\mathcal V)$-close,
for any $z\in Z$ there is $V_z\in\mathcal V$ with
$\varphi(z),\phi(z)$ being singular $0$-simplexes in $f^{-1}(V_z)$
(we identify each $z\in Z$ with the singular $0$-simplex $\sigma\in S_0(Z;G)$
such that $|\sigma|=\{z\}$).
So, $\varphi(z)-\phi(z)$ is a singular $0$-cycle in $f^{-1}(V_z)$.
Since
$\rm{St}(f^{-1}(V_z), f^{-1}(\mathcal V))\stackrel{H_0}{\hookrightarrow}f^{-1}(U_z)$
for some $U_z\in\mathcal U_0$,
there is $c^1_z\in S_1(f^{-1}(U_z);G)$ with $\partial c^1_z=\varphi(z)-\phi(z)$.
For every $z\in Z$ we define
$D_0'(z)=c^1_z$ if $z\not\in A$ and $D_0'(z)=\Phi(z)$ if $z\in A$,
and extend $D_0'$ linearly to a homomorphism $D_0:S_0(Z;G)\to S_1(X;G)$.
Obviously, $|D_0(z)|\cup |\phi(z)|\subset f^{-1}(U_z)$ if $z\not\in A$. If $z\in A$,
then there is $V_z'\in\mathcal V$ with $|\Phi(z)|\cup |\phi(z)|\subset f^{-1}(V_z')$
(recall that $\Phi$ is $f^{-1}(\mathcal V)$-small).
So, $|\phi(z)|\subset f^{-1}(V_z')\cap f^{-1}(V_z)$,
which shows that $\rm{St}(f^{-1}(V_z),f^{-1}(\mathcal V))\neq\varnothing$
and contains $|\Phi(z)|\cup |\phi(z)|$.
Thus, $|\Phi(z)|\cup |\phi(z)|\subset f^{-1}(U_z)$ for all $z\in Z$.
Therefore, $D_0$ satisfies  conditions $(6) - (8)$.

Suppose we already constructed the homomorphisms $D_i:S_i(Z;G)\to S_{i+1}(X;G)$, $i\leq k$,
satisfying the above conditions,
and let $\sigma$ be a singular $(k+1)$-simplex from $S_{k+1}(Z;G)$.
Since $\varphi$ and $\phi$ are $f^{-1}(\mathcal V)$-close,
there exists $V_\sigma\in\mathcal V$ such that
$|\phi(z)|\cup |\varphi(\tau)|\cup |\phi(\tau)|\subset f^{-1}(V_\sigma)$
for all faces $\tau$ and all vertexes $z$ of $\sigma$.
On the other hand, according to $(8)$, for any $k$-singular face $\tau$ of $\sigma$
there is $U_\tau^k\in\mathcal U_k$ with $f^{-1}(U_\tau^k)$ containing $|D_i(s)|\cup|\phi(z)|$
for all $i\leq k$ and all $i$-dimensional faces $s$ of $\tau$ and $z\in\tau^{(0)}$.
So, $|\phi(z)|\subset f^{-1}(V_\sigma)\cap f^{-1}(U_\tau^k)$ for all $k$-faces $\tau$ of $\sigma$
and all $z\in\tau^{(0)}$. Hence,
$\rm{St}(f^{-1}(V_\sigma);f^{-1}(\mathcal U_k))\neq\varnothing$ and contains $|\gamma_\sigma|$
and all $|D_i(s)|\cup|\phi(s)|$,
$i\leq k$ and $s$ is a $i$-dimensional face of $\sigma$,
where
$\gamma_\sigma=\varphi(\sigma)-\phi(\sigma)-D_k(\partial\sigma)$.
Choose $V_\sigma^{k+1}\in\mathcal V_{k+1}$ and $U_\sigma^{k+1}\in\mathcal U_{k+1}$
such that $\rm{St}(f^{-1}(V_\sigma);f^{-1}(\mathcal U_k))\subset f^{-1}(V_\sigma^{k+1})$
and $f^{-1}(V_\sigma^{k+1})\stackrel{H_{k+1}}{\hookrightarrow}f^{-1}(U_\sigma^{k+1})$.
Finally, since $\gamma_\sigma$ is a singular $(k+1)$-cycle in $f^{-1}(V_\sigma^{k+1})$,
we can find a $(k+2)$-chain $c^{k+2}_\sigma\in S_{k+2}(f^{-1}(U_\sigma^{k+1});G)$
with $\partial c^{k+2}_\sigma=\gamma_k$.
Define $D_{k+1}'(\sigma)=c^{k+2}_\sigma$ if $\sigma\not\in S_{k+1}(A;G)$ and
$D_{k+1}'(\sigma)=\Phi(\sigma)$ if $\sigma\in S_{k+1}(A;G)$,
and extend $D_{k+1}'$ linearly to a homomorphism $D_{k+1}:S_{k+1}(Z;G)\to S_{k+1}(X;G)$
satisfying conditions $(6) - (8)$.

In this way
we construct the homomorphisms $D_k$ for all $k\leq n$ satisfying conditions $(6) - (8)$.
Then $D=\{D_k\}_{k\leq n}$ is the required homotopy
between $\varphi$ and $\phi$ extending $\Phi$.
\end{proof}

We also have the following proposition, whose proof is similar to that one of Proposition 3.7.

\begin{pro}
Let $f:X\to Y$ be as in Proposition $3.1$, $K$ a simplicial complex with $\dim K\leq n$ and
$L$ a sub-complex of $K$.
Then for every open cover $\mathcal U$ of $Y$
there exists an open refinement $\mathcal V$ of $\mathcal U$ such that
for any two correct $f^{-1}(\mathcal V)$-close chain morphisms $\varphi,\phi:C(K;G)\to S(X;G)$
and any $f^{-1}(\mathcal V)$-small chain homotopy
$\Phi:C(L;G)\to S(X;G)$ between $\varphi|C(L;G)$ and $\phi|C(L;G)$
there exists a  $f^{-1}(\mathcal U)$-small homotopy
$D:C(K;G)\to S(X;G)$ between $\varphi$ and $\phi$ extending $\Phi$.
\end{pro}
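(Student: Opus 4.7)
The plan is to mimic the proof of Proposition 3.7, simply replacing the singular chain complex $S^{(n)}(Z;G)$ by the oriented chain complex $C(K;G)$ of a simplicial complex with $\dim K\leq n$, and replacing the singular sub-complex $S(A;G)$ by $C(L;G)$. The inductive cascade of covers provided by the closed homologically $UV^n(G)$ map $f$ works identically in this setting, since the only property we really use is that cycles supported in $f^{-1}(V_{k+1}(y))$ bound chains in $f^{-1}(U_{k+1}(y))$.

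First I would fix $\mathcal U$ and construct, exactly as in the proof of Proposition 3.7, a decreasing sequence of open covers $\mathcal U_k,\mathcal V_k$ of $Y$ for $k=n,n-1,\ldots,0$ such that $\mathcal U=\mathcal U_n$, each $\mathcal U_k$ is a star-refinement of $\mathcal V_{k+1}$, and for every $V\in\mathcal V_k$ some $U\in\mathcal U_k$ satisfies $f^{-1}(V)\stackrel{H_k}{\hookrightarrow}f^{-1}(U)$ if $k\geq 1$, while $\mathrm{St}(f^{-1}(V),f^{-1}(\mathcal V_0))\stackrel{H_0}{\hookrightarrow}f^{-1}(U)$ if $k=0$. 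Then set $\mathcal V=\mathcal V_0$.

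The main step is the inductive construction of homomorphisms $D_k:C_k(K;G)\to S_{k+1}(X;G)$ for $k=0,1,\ldots,n$ satisfying conditions analogous to $(6)$, $(7)$, $(8)$ in the proof of Proposition 3.7: the chain homotopy identity $\partial D_k(c)=\varphi(c)-\phi(c)-D_{k-1}(\partial c)$, the extension condition $D_k(c)=\Phi(c)$ for $c\in C_k(L;G)$, and the control condition that for each $k$-simplex $\sigma\in K$ there is $U_\sigma\in\mathcal U_k$ with $f^{-1}(U_\sigma)$ containing $|D_i(\tau)|\cup|\phi(\tau)|$ for all $i\leq k$ and all $i$-dimensional faces $\tau$ of $\sigma$. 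For each oriented $0$-simplex $v$ of $K$ that does not lie in $L$, the $0$-cycle $\varphi(v)-\phi(v)$ lies in $f^{-1}(V_v)$ for some $V_v\in\mathcal V$, so by the $H_0$-property it bounds a $1$-chain in $f^{-1}(U_v)$ for a suitable $U_v\in\mathcal U_0$, and we set $D_0(v)$ to be that chain; on $L$ we put $D_0=\Phi$. The coincidence $|\phi(v)|\subset f^{-1}(V_v')\cap f^{-1}(V_v)$ for $v\in L$ shows that the star condition controls $\Phi$ as well, so $(8)$ holds for $k=0$.

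For the inductive step, given $D_0,\ldots,D_k$ satisfying the three conditions, and any $(k+1)$-simplex $\sigma\notin L$, I would form $\gamma_\sigma=\varphi(\sigma)-\phi(\sigma)-D_k(\partial\sigma)$, observe that this is a cycle, and combine the $f^{-1}(\mathcal V)$-closeness of $\varphi,\phi$ on $\sigma$ together with the inductive condition $(8)$ applied to the $k$-faces of $\sigma$ to conclude that $|\gamma_\sigma|$ is contained in the star $\mathrm{St}(f^{-1}(V_\sigma),f^{-1}(\mathcal U_k))$ for some $V_\sigma\in\mathcal V$; by the choice of the covers this star sits inside some $f^{-1}(V_\sigma^{k+1})$ with $V_\sigma^{k+1}\in\mathcal V_{k+1}$, and the $H_{k+1}$-property produces a $(k+2)$-chain $c_\sigma^{k+2}$ in $f^{-1}(U_\sigma^{k+1})$ for some $U_\sigma^{k+1}\in\mathcal U_{k+1}$ with $\partial c_\sigma^{k+2}=\gamma_\sigma$. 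I then define $D_{k+1}(\sigma)$ to be $c_\sigma^{k+2}$ if $\sigma\notin L$ and $\Phi(\sigma)$ if $\sigma\in L$, and extend linearly to $C_{k+1}(K;G)$. Conditions $(6)$--$(8)$ are verified exactly as before. Since $\dim K\leq n$, the induction terminates after finitely many steps and the resulting $D=\{D_k\}_{k\leq n}$ is an $f^{-1}(\mathcal U)$-small chain homotopy between $\varphi$ and $\phi$ extending $\Phi$.

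The only real obstacle is the same bookkeeping as in Proposition 3.7, namely showing that the $k$-fold star-refinement guarantees the global $f^{-1}(\mathcal U)=f^{-1}(\mathcal U_n)$ control on $D_n$, and verifying that the extension clause $D_k=\Phi$ on $C(L;G)$ is compatible with the control condition $(8)$; both are handled by the same star-intersection argument used in the base case.
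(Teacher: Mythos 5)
Your proposal is correct and follows exactly the route the paper intends: the paper states Proposition 3.8 with only the remark ``whose proof is similar to that one of Proposition 3.7,'' and your line-by-line transcription of that proof, replacing $S^{(n)}(Z;G)$ by $C(K;G)$ and $S(A;G)$ by $C(L;G)$, is precisely what is meant.
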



\section{Homologically locally connected spaces}

First, let us note that all results from Section 3 remain true
in case $X$ is an $lc^n_G$-space and $f:X\to X$ is the identity map.
Some of these results characterize $lc^n_G$-spaces.
For example, we have the following proposition.

\begin{pro}
A paracompact space $X$ is $lc^n_G$ if and only if
each open cover $\mathcal U$ of $X$ has an open refinement $\mathcal V$
such that for any two correct $\mathcal V$-close chain morphisms
$\varphi,\phi:S^{(n)}(Z;G)\to S(X;G)$,
where $Z$ is an arbitrary space,
there exists an  $\mathcal U$-small homotopy
$D:S^{(n)}(Z;G)\to S(X;G)$ between $\varphi$ and $\phi$.
\end{pro}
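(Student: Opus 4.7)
The statement is an ``if and only if,'' so I would prove the two directions separately, treating necessity as a one-step reduction to Section~3 and putting the actual work into sufficiency. For necessity, if $X$ is $lc^n_G$ then by definition the identity map $\mathrm{id}_X:X\to X$ is a closed homologically $UV^n(G)$-surjection between paracompact spaces, so Proposition~3.7 applies to it with $f^{-1}(\mathcal U)=\mathcal U$; taking $A=\varnothing$ makes the chain homotopy $\Phi$ vacuous, and the refinement $\mathcal V$ yielded by Proposition~3.7 is exactly the one required.

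For sufficiency, fix $x\in X$ and an open neighborhood $U$ of $x$; the goal is to produce a neighborhood $V$ of $x$ inside $U$ such that the inclusion $V\hookrightarrow U$ annihilates $H_k(\cdot;G)$ for every $k\leq n$. The trick is to choose a test cover that forces the smallness witness to collapse to $U$. Apply the hypothesis to the cover $\mathcal U':=\{U,\,X\setminus\{x\}\}$ to obtain a refinement $\mathcal V$, and pick any $V\in\mathcal V$ with $x\in V$; since $X\setminus\{x\}$ does not contain $x$, we necessarily have $V\subseteq U$. On $S^{(n)}(V;G)$ consider two chain morphisms into $S(X;G)$: the inclusion-induced $\varphi=i_\sharp$, and the ``constant morphism'' $\phi$ defined by $\phi_0(v)=c_x$ (the constant singular $0$-simplex at $x$) for every $0$-simplex $v$ and $\phi_k=0$ for $k\geq 1$. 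A direct check (using $\phi_0\partial_1\sigma=c_x-c_x=0$ and $\epsilon\phi_0=\epsilon$) shows that $\phi$ is a correct chain morphism, and $\varphi,\phi$ are $\mathcal V$-close, since for every $\sigma\in S^{(n)}(V;G)$ the element $V\in\mathcal V$ itself contains $|\varphi(\tau)|\cup|\phi(\tau)|\subseteq V\cup\{x\}=V$ for all faces $\tau$ of $\sigma$.

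By the hypothesis there is then a $\mathcal U'$-small chain homotopy $D:S^{(n)}(V;G)\to S(X;G)$ between $\varphi$ and $\phi$. For every $\sigma$ the witnessing element $U_\sigma\in\mathcal U'$ must contain the point $x=|\phi(v)|$ for any vertex $v$ of $\sigma$, so $U_\sigma=U$; consequently $|D(\tau)|\subseteq U$ for all $\tau$, and $D$ actually takes values in $S(U;G)$. For any $k$-cycle $z\in S_k(V;G)$ with $k\leq n$, the homotopy relation reduces to $\partial D(z)=\varphi(z)-\phi(z)=z$, because $\phi$ annihilates cycles ($\phi_k=0$ for $k\geq 1$, and for $k=0$ the reduced-cycle condition $\epsilon(z)=0$ gives $\phi_0(z)=\epsilon(z)\,c_x=0$). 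Hence $z$ bounds in $U$, so $\{x\}$ is homologically $UV^n(G)$, and $X$ is $lc^n_G$. The only substantive step in the whole argument is the design of the pair $(\mathcal U',\phi)$ forcing the smallness witness to collapse onto $U$; everything else is straightforward bookkeeping from the definitions of correctness, $\mathcal V$-closeness, and $\mathcal U$-small chain homotopy.
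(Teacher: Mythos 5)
Your proof is correct and follows the same skeleton as the paper's: necessity is reduced to Proposition~3.7 with $f=\mathrm{id}$, and sufficiency is obtained by applying the hypothesis to a two-element cover designed so that the ``smallness witness'' for the homotopy must collapse onto the neighborhood $U$ of $x$. The difference is in the choice of the comparison morphism $\phi$. The paper takes $\phi(\sigma^k)=\sigma^k_x$, the constant singular $k$-simplex at $x$, i.e.\ the chain morphism induced by the constant map; since $\partial\sigma^{k+1}_x$ equals $\sigma^k_x$ or $0$ depending on the parity of $k$, the paper must correct $D(c^k)$ by a term $(\sum g_i)\sigma^{k+1}_x$ and then split into a parity case analysis to verify $\partial c^{k+1}=c^k$. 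You instead take $\phi_0(v)=c_x$ and $\phi_k=0$ for $k\geq 1$; this is still a correct chain morphism (the boundary and augmentation identities are immediate), and because $\phi$ annihilates every reduced cycle outright (for $k\geq1$ trivially, for $k=0$ via $\epsilon(z)=0$), the homotopy relation reduces directly to $\partial D(z)=z$ with no correction term and no parity discussion. This is a genuine simplification of the sufficiency argument. One small remark: the paper uses the cover $\{U_x,\,X\setminus\overline{W_x}\}$, whereas you use $\{U,\,X\setminus\{x\}\}$. Your version needs $\{x\}$ to be closed; in the ambient convention (paracompact Hausdorff) this is harmless, but the paper's choice of a closed set $\overline{W_x}$ is the more robust formulation and costs nothing, so it would be safer to adopt it. Everything else — the $\mathcal V$-closeness check with witness $V$, the forced equality $U_\sigma=U$, and the conclusion that $D$ takes values in $S(U;G)$ — matches the paper's reasoning.
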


\begin{proof}
The necessity follows from Proposition 3.7.
So, we need to prove only the sufficiency.
Suppose $X$ satisfies that condition, and let $U_x$ be a neighborhood of a point $x\in X$ and
$\mathcal U=\{U_x,X\setminus\overline W_x\}$,
where $W_x$ is a neighborhood of $x$ with $\overline W_x\subset U_x$.
Then there is an open cover $\mathcal V$ of $X$ satisfying the hypotheses of the proposition.
We can assume that $\mathcal V$ is a star-refinement of $\mathcal U$,
and take $V_x\in\mathcal V$ containing $x$.
Obviously, $\rm{St}(V_x,\mathcal V)\subset U_x$.
Consider the correct chain morphisms
$\varphi,\phi:S^{(n)}(V_x;G)\to S(X;G)$
defined by $\varphi(c)=c$ and
$\phi(\sigma^k)=\sigma_x^k$ for all $c\in S^{(n)}(V_x;G)$ and all singular $k$-simplexes
$\sigma^k\in S^{(n)}(V_x;G)$,
where $\sigma_x^k$ denotes the unique singular $k$-simplex with $|\sigma_x^k|=\{x\}$.
Then there exists a $\mathcal U$-small homotopy
$D:S^{(n)}(V_x;G)\to S(X;G)$ between $\varphi$ and $\phi$.
Let $c^k=\sum g_i\sigma^k_i\in S^{(n)}(V_x;G)$ be a $k$-cycle, $k\leq n$.
Hence, $D(c^k)$ is a chain from $S_k(X;G)$ such that $\partial D(c^k)=c^k-\phi(c^k)$.
Define $c^{k+1}=D(c^k)+(\sum g_i)\sigma_x^{k+1}$.
So,
$\partial c^{k+1}=c^k-(\sum g_i)\sigma_x^k+(\sum g_i)\partial\sigma_x^{k+1}$.
When $k+1$ is an odd integer,
we have $\partial\sigma_x^{k+1}=\sigma_x^k$.
Therefore, in this case $\partial c^{k+1}=c^k$.
For even integers $k+1$ we have
$\partial\sigma_x^{k+1}=0$ and $\partial\sigma_x^k=\sigma_x^{k-1}$.
Then, since $c^k$ is a cycle,
$0=\partial\phi(c^k)=(\sum g_i)\sigma_x^{k-1}$.
Consequently, $\sum g_i=0$ and $\partial c^{k+1}=c^k$.
Therefore, $\partial c^{k+1}=c^k$ for all integers $k$.

It remains to see that $|c^{k+1}|\subset U_x$.
To this end, let $\sigma_j^k$ be a fixed singular simplex from the representation of $c^k$.
Since $D$ is $\mathcal U$-small,
$|D(\sigma_j^k)|\cup |\phi(v)|$ is contained in an element of $\mathcal U$
for every vertex $v$ of $\sigma_j^k$.
But $|\phi(v)|=|\sigma_x^0|=\{x\}$,
so $|D(\sigma_j^k)|\subset U_x$ for all $j$.
Because $|(\sum g_i)\sigma_x^{k+1}|=\{x\}$, we finally conclude that
$|c^{k+1}|\subset U_x$.
\end{proof}

Here is another property of $lc^n_G$-spaces,
similar to the corresponding property for $LC^n$-spaces (see \cite[Theorem 6.1]{hu}).

\begin{pro}
Let $X$ be a paracompact $lc^n_G$-space.
Then for each open cover $\mathcal U$ of $X$
there exists a simplicial complex $K$ of dimension $\leq n+1$ together
with a correct chain morphism $\Phi:C(K;G)\to S(X;G)$
such that for every correct continuous $\mathcal V$-small chain morphism
$\varphi:S(Y;G)\to S(X;G)$, where $Y$ is a paracompact space with $\dim Y\leq n+1$,
there exist an open cover $\Upsilon$ of $Y$ and
a chain morphism $\phi:S(Y,\Upsilon;G)\to C(K;G)$ such that $\varphi|S(Y,\Upsilon;G)$ and
$(\Phi\circ\phi)$ are $\mathcal U$-close.
\end{pro}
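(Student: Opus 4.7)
The plan is to construct $K$ as the $(n+1)$-skeleton of the nerve of a suitably fine locally finite open cover of $X$, build $\Phi$ by first specifying its values on the vertices of $K$ and then extending via Proposition 3.1, and finally factor any candidate $\varphi$ through $K$ by a Whitehead-type construction analogous to the one used in the proof of Proposition 3.6.

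First, applying Proposition 3.1 with $f=\mathrm{id}_X$, I choose an open cover $\mathcal V_0$ refining $\mathcal U$ so that every correct partial algebraic realization of an $(n+1)$-dimensional simplicial complex in $\mathcal V_0$ extends to a full algebraic realization in $\mathcal U$. I then take an open star-refinement $\mathcal W_0$ of $\mathcal V_0$ and a locally finite open refinement $\mathcal W$ of $\mathcal W_0$. Let $K$ be the $(n+1)$-skeleton of the nerve $N(\mathcal W)$. For each vertex $W$ of $K$ fix a point $x_W\in W$, viewed as a singular $0$-simplex; this defines a correct $0$-dimensional chain morphism $\Phi^{(0)}\colon C_0(K;G)\to S_0(X;G)$. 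Whenever $(W_0,\ldots,W_k)$ is a simplex of $K$ the sets $W_i$ share a point, so all $x_{W_i}$ lie in $\mathrm{St}(W_0,\mathcal W_0)$ which is contained in some $V\in\mathcal V_0$. Hence $\Phi^{(0)}$ is a correct partial algebraic realization of $K$ in $\mathcal V_0$, and Proposition 3.1 extends it to a full algebraic realization $\Phi\colon C(K;G)\to S(X;G)$ in $\mathcal U$.

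Next I specify $\mathcal V$ as a star-refinement of $\mathcal W$. Given a continuous correct $\mathcal V$-small chain morphism $\varphi\colon S(Y;G)\to S(X;G)$ with $\dim Y\le n+1$, continuity and $\mathcal V$-smallness provide, for each $y\in Y$, a neighborhood $O_y$ and a $W_y\in\mathcal W$ with $|\varphi(z)|\subset W_y$ for all $z\in O_y$. Since $\dim Y\le n+1$, there is a locally finite open cover $\Gamma=\{\Lambda_t:t\in T\}$ of $Y$ of order at most $n+2$ refining $\{O_y\}$; its nerve $N(\Gamma)$ has dimension at most $n+1$. Choosing $y(t)\in Y$ with $\Lambda_t\subset O_{y(t)}$ and setting $W(t)=W_{y(t)}$, the assignment $t\mapsto W(t)$ is simplicial from $N(\Gamma)$ to $K$: whenever $\Lambda_{t_0}\cap\cdots\cap\Lambda_{t_k}\ne\varnothing$ with $k\le n+1$, any point $z$ in that intersection satisfies $|\varphi(z)|\in\bigcap_iW(t_i)$, so $\{W(t_0),\ldots,W(t_k)\}$ spans a simplex of $N(\mathcal W)$, which sits in $K$ because $k\le n+1$. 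Let $\rho_\sharp\colon C(N(\Gamma);G)\to C(K;G)$ be the induced chain morphism.

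I then follow the construction from the proof of Proposition 3.6: let $\kappa\colon Y\to|N(\Gamma)|$ be a canonical map and use the chain equivalence $\gamma\colon S(|N(\Gamma)|,\mathcal S;G)\to C^\Omega(N(\Gamma);G)$ together with the natural $\theta\colon C^\Omega(N(\Gamma);G)\to C(N(\Gamma);G)$ from \cite[Proposition 8.6.6]{hw}. Setting $\Upsilon$ equal to the common refinement of $\Gamma$ and $\kappa^{-1}(\mathcal S)$, I define $\phi=\rho_\sharp\circ\theta\circ\gamma\circ\kappa_\sharp\colon S(Y,\Upsilon;G)\to C(K;G)$. For a singular simplex $\sigma\in S(Y,\Upsilon;G)$ with $|\sigma|\subset\Lambda_{t^*}$, the vertices of $\phi(\sigma)$ lie among the $W(t_i)$ for indices $t_i$ with $\Lambda_{t_i}$ containing a vertex of $\sigma$; for any fixed vertex $v^*$ of $\sigma$, $|\varphi(v^*)|$ is a common point of all these $W(t_i)$ and also lies in the $\mathcal V$-set witnessing $\mathcal V$-smallness of $\varphi$ on $\sigma$. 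The $\mathcal U$-closeness of $\varphi|S(Y,\Upsilon;G)$ and $\Phi\circ\phi$ then follows by chasing carriers through the successive star-refinements linking $\mathcal W$, $\mathcal V_0$ and $\mathcal U$. The main technical obstacle is precisely this bookkeeping: fixing $\mathcal V_0$, $\mathcal W_0$, $\mathcal W$ and $\mathcal V$ with enough intermediate star-refinements so that for every face $\tau$ of every $\sigma\in S(Y,\Upsilon;G)$ both $|\varphi(\tau)|$ and $|\Phi(\phi(\tau))|$ can be placed inside a common element of $\mathcal U$.
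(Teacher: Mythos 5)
Your construction is essentially the paper's: build $K$ as the $(n+1)$-skeleton of the nerve of a fine locally finite cover, define $\Phi$ on vertices by picking points and extend via Proposition 3.1, and factor $\varphi$ through $K$ by a simplicial map combined with the Hilton--Wylie chain equivalence $\gamma$, $\theta$ and the canonical map $\kappa$, exactly as in the proof of Proposition 3.6. However, there is a concrete gap at the very top of your refinement tower, and it is precisely the one that would block the final carrier chase you defer. You apply Proposition 3.1 with $f=\mathrm{id}$ and the original cover $\mathcal U$, so the resulting full realization $\Phi$ is only $\mathcal U$-small. In the end you must show that $|\varphi(\tau)|$ (which lies in some $V_\sigma\in\mathcal V$) and $|\Phi(\phi(\tau))|$ (which lies in some $U_\delta\in\mathcal U$) sit inside a \emph{single} element of $\mathcal U$. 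These two sets are linked only through the common point $|\varphi(v^*)|\in V_\sigma\cap W(t_j)$ and $x_{W(t_j)}\in U_\delta\cap W(t_j)$, so the best you can conclude is that $V_\sigma\cup W(t_j)\cup U_\delta$ lies in a star of $U_\delta$ with respect to some cover --- but $U_\delta$ is already a member of $\mathcal U$, and nothing in your setup forces its star to stay inside a single element of $\mathcal U$.

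The paper avoids this by first taking a star-refinement $\mathcal U_1$ of $\mathcal U$, applying Proposition 3.1 to $\mathcal U_1$ to get $\mathcal V_1$, and building $K$ from a star-refinement $\mathcal V$ of $\mathcal V_1$. Then $\Phi$ is $\mathcal U_1$-small, so $U_\delta\in\mathcal U_1$, $V_\sigma\in\mathcal V$ refines $\mathcal U_1$, and the linking element (the paper's $\lambda(\Lambda_j)\in\mathcal V$, your $W(t_j)$) also lies in an element of $\mathcal U_1$; all three meet, hence lie in $\mathrm{St}(\lambda(\Lambda_j),\mathcal U_1)$, which is contained in one element of $\mathcal U$ because $\mathcal U_1$ star-refines $\mathcal U$. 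You should insert this extra star-refinement $\mathcal U_1$ between $\mathcal U$ and the cover you feed to Proposition 3.1, and then actually carry out the carrier chase rather than flagging it as ``the main technical obstacle'' --- the step is where the hypothesis on the star-refinement of $\mathcal U$ is used, so it cannot be left as bookkeeping.
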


\begin{proof}
Let $\mathcal U$ be a given open cover of $X$ and
$\mathcal U_1$ be a star open refinement of $\mathcal U$.
Then there is an open cover $\mathcal V_1$ of $X$
satisfying the hypotheses of Proposition 3.1
(with $X=Y$, $\mathcal U=\mathcal U_1$ and $f$ being the identity).
Let $\mathcal V$ be a locally finite star-refinement of $\mathcal V_1$ and
$K$ be the $(n+1)$-dimensional skeleton of the nerve  of $\mathcal V$
(we consider $K$ as a simplicial complex, not as a polytope).
For each $V\in\mathcal V$ pick a point $x_v\in V$ and
define $\Phi_0'(V)=x_v$ ($V$ is considered here as a vertex of $K$),
and extend $\Phi_0'$ to a homomorphism $\Phi_0:C_0(K;G)\to S_0(X;G)$.
Because $\mathcal V$ is a star-refinement of $\mathcal V_1$,
$\Phi_0$ is a correct partial algebraic realization of $C(K;G)$ in $\mathcal V_1$.
So, by Proposition 3.1,
$\Phi_0$ extends to a full algebraic realization $\Phi:C(K;G)\to S(X;G)$ in $\mathcal U_1$.

Suppose now that $Y$ is a paracompact space of dimension $\leq n+1$ and
$\varphi:S(Y;G)\to S(X;G)$ is a continuous correct $\mathcal V$-small morphism.
Then
$\varphi(y)$ is a singular $0$-simplex in $S(X;G)$,
so $|\varphi(y)|$ is a point and it is contained in some $V_y\in\mathcal V$.
Since $\varphi$ is continuous,
there is a neighborhood $\Lambda_y$ of $y$ in $Y$
such that $|\varphi(z)|\subset V_y$ for all $z\in\Lambda_y$.
In this way we obtain an open cover $\Gamma=\{\Lambda_y:y\in Y\}$ of $Y$.
Because $\dim Y\leq n+1$,
we can suppose that
$\Gamma$ is locally finite and its nerve $\mathcal N_\Gamma$ is at most $(n+1)$-dimensional.
So, there exists a simplicial map $\lambda:\mathcal N_\Gamma\to K$
defined by the assignment $\Lambda_y\mapsto V_y$, $y\in Y$,
and let $\kappa:Y\to|\mathcal N_\Gamma|$ be a canonical map ($|\mathcal N_\Gamma|$ is
equipped with the Whitehead topology).
According to \cite[Proposition 8.6.6]{hw},
there are an open cover $\mathcal S$ of $|\mathcal N_\Gamma|$
such that each $|s|$, $s\in\mathcal N_\Gamma$, is contained in some $P_s\in\mathcal S$,
and a chain equivalence
$\gamma:S(|\mathcal N_\Gamma|,\mathcal S;G)\to C^\Omega(\mathcal N_\Gamma;G)$.
Here $C^\Omega(\mathcal N_\Gamma;G)$ is
the chain complex whose simplexes are finite arrays
$[\Lambda_0,\Lambda_1,..,\Lambda_k]$,
where all $\Lambda_i$, not necessarily distinct,
are vertices of $\mathcal N_\Gamma$ spanning a simplex from $\mathcal N_\Gamma$.
There exists also a natural chain morphism
$\theta:C^\Omega(\mathcal N_\Gamma;G)\to C(\mathcal N_\Gamma;G)$
such that $\theta([\Lambda_0,\Lambda_1,..,\Lambda_k])$ is the simplex
$(\Lambda_0,\Lambda_1,..,\Lambda_k)\in C(\mathcal N_\Gamma;G)$
if all $\Lambda_i$ are distinct,  and $0$ otherwise.
Let $\Upsilon$ be the intersection of the covers $\Gamma$ and $\kappa^{-1}(\mathcal S)$,
and let $\phi:S(Y,\Upsilon;G)\to C(K;G)$ be
the chain morphism $\phi=\lambda_\sharp\circ\theta\circ\gamma\circ\kappa_\sharp$.

It remains to show that $\varphi|S(Y,\Upsilon;G)$ and
$(\Phi\circ\phi)$ are $\mathcal U$-close.
We follow the final part of the proof of Proposition 3.6.
Let $\sigma\in S(Y,\Upsilon;G)$ be a singular simplex.
Since $\varphi$ is $\mathcal V$-small,
there is $V_\sigma\in\mathcal V$ containing $|\varphi(\tau)|$ for all faces $\tau$ of $\sigma$.
On the other hand,
$\sigma_1=\kappa_\sharp(\sigma)$ is a singular simplex
from $S(|\mathcal N_\Gamma|,\mathcal S;G)$ such that,
according to the definition of $\gamma$ (see \cite[pp. 339]{hw}),
$\gamma(\sigma_1)$ is a \lq\lq simplex" $s=[\Lambda_0,\Lambda_1,..,\Lambda_k]$
from $C^\Omega(\mathcal N;G)$ satisfying the following condition:
if $\tau$ is a face of $\sigma$, then
$\kappa_\sharp(\tau)$ is a face of $\sigma_1$ and
the vertices of $\gamma(\kappa_\sharp(\tau))$ are also
vertices of $\gamma(\kappa_\sharp(\sigma))$.
In particular, for any vertex $v$ of $\sigma$ we have
$\gamma(\kappa_\sharp(v))=\gamma(\kappa_\sharp(|v|))$ is one of the vertexes $\Lambda_i$
such that $|v|$ is a point from $\Lambda_i$.
So, for every face $\tau$ of $\sigma$ either $\phi(\tau)=0$ or
$\phi(\tau)$ is a simplex from $K$ whose vertices are contained in the set
$\{\lambda(\Lambda_i);i=0,1,..k\}$,
but definitely the union of all $\phi(\tau)$,
$\tau$ is a face of $\sigma$, is non-empty.
Hence,
there exists a simplex $\delta\in K$  containing $\phi(\tau)$ for all faces $\tau$ of $\sigma$
such that the vertices of $\delta$ are in the set $\{\lambda(\Lambda_i);i=0,1,..k\}$.
Since $\Phi$ is $\mathcal U_1$-small,
we can find $U_\delta\in\mathcal U_1$ containing all
$|\Phi(\phi(\tau))|\subset U_\delta$, $\tau$ is a face of $\sigma$.
We fix a vertex $v^*$ of $\sigma$.
Then
$\phi(v^*)=\lambda(\Lambda_{j})$ for some $0\leq j\leq k$ with $|v^*|\in\Lambda_{j}$,
and $\varnothing\neq |\Phi(\phi(v^*))|\subset U_\delta$.
But $\Phi(\lambda(\Lambda_{j}))$ is a singular $0$-simplex from $S(X,G)$
whose carrier is a point $x^*\in\lambda(\Lambda_{j})$.
Consequently, according to the definition of the sets
$\Lambda_y$, we have $|\varphi(v^*)|\in\lambda(\Lambda_{j})$.
Therefore, $x^*\in U_\delta\cap \lambda(\Lambda_{j})$ and
$|\varphi(v^*)|\in V_\sigma\cap\lambda(\Lambda_{j})$ with
$\lambda(\Lambda_{j}), V_\sigma\in\mathcal V$ and  $U_\delta\in\mathcal U_1$.
Since $V_\sigma$ is contained in some element of $\mathcal U_1$,
we have that
$$
|\varphi(\tau)|\cup|\Phi(\phi(\tau))|
\subset  U_\delta\cup V_\sigma\cup\lambda(\Lambda_{j})
\subset\rm{St}\big(\lambda(\Lambda_{j}),\mathcal U_1\big)
$$
for all faces $\tau$ of $\sigma$.
Finally, since $\mathcal U_1$ is a star refinement of $\mathcal U$,
there is $U\in\mathcal U$ containing
$\rm{St}\big(\lambda(\Lambda_j,\mathcal U_1\big)$.
\end{proof}

Because every $n$-dimensional metric $LC^n$-space is an $ANR$,
it is interesting if $n$-dimensional metric $lc^n_G$-spaces are algebraic $ANR_G$.
We still do not know whether this is true,
but we can show that any such space has a weaker property.

\begin{dfn}
We say that a metric space $X$ is an {\em approximate absolute neighborhood $G$-retract}
(briefly, {\em algebraic $AANR_G$})
if for every embedding of $X$ as a closed subset of a metric space $Y$ and
every open cover $\mathcal U$ of $X$
there is a neighborhood $W$ of $X$ in $Y$,
an open cover $\alpha$ of $W$ and a chain morphism $\phi:S(W,\alpha;G)\to S(X;G)$
such that $\phi|S(X,\alpha;G)$ and the identity morphism on $S(X,\alpha;G)$
are $\mathcal U$-close.
The morphism $\phi$ is called an algebraic approximate $\mathcal U$-retraction.
\end{dfn}

\begin{pro}
Any $n$-dimensional metric $lc^n_G$-space is an $AANR_G$.
\end{pro}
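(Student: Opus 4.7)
The plan is to adapt the nerve construction used in the proofs of Propositions~3.6 and~4.2 to the present setting, exploiting the hypothesis $\dim X\le n$ so that the nerve of a suitably fine cover of $X$ has dimension at most $n$, which keeps us inside the range to which Proposition~3.1 applies. Given an embedding of $X$ as a closed subset of a metric space $Y$ and an open cover $\mathcal U$ of $X$, I first choose a star refinement $\mathcal U_1$ of $\mathcal U$ and apply Proposition~3.1 to $f=\mathrm{id}_X$ (which is a homologically $UV^n(G)$-surjection because $X$ is $lc^n_G$) to obtain an open cover $\mathcal V_1$ of $X$ refining $\mathcal U_1$ with the algebraic extension property for $(n+1)$-dimensional simplicial complexes. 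Because $X$ is an $n$-dimensional paracompact metric space, I then take $\mathcal V$ to be a locally finite open refinement of a star refinement of $\mathcal V_1$ whose order is at most $n+1$; a short argument shows that $\mathcal V$ is itself a star refinement of $\mathcal V_1$, while its nerve $K=\mathcal N_{\mathcal V}$ has dimension at most $n$.

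For every $V\in\mathcal V$ I pick a point $x_V\in V$ and set $\Phi_0(V)=x_V$ on the $0$-skeleton of $K$, viewing $x_V$ as a singular $0$-simplex in $X$. The star-refinement property makes $\Phi_0$ a correct partial algebraic realization of $C(K;G)$ in $\mathcal V_1$, so Proposition~3.1 extends it to a full algebraic realization $\Phi:C(K;G)\to S(X;G)$ in $\mathcal U_1$. Because $Y$ is metric and $X$ is closed in $Y$, I enlarge each $V\in\mathcal V$ to an open set $\widetilde V\subset Y$ with $\widetilde V\cap X=V$ and intersection pattern preserved, just as in the proof of Proposition~3.6; putting $W=\bigcup_{V\in\mathcal V}\widetilde V$ and $\widetilde{\mathcal V}=\{\widetilde V:V\in\mathcal V\}$, the nerve of $\widetilde{\mathcal V}$ is again $K$. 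Let $\kappa:W\to|K|$ be a canonical map into the polytope carrying the Whitehead topology; by \cite[Proposition~8.6.6]{hw} there are an open cover $\mathcal S$ of $|K|$, a chain equivalence $\gamma:S(|K|,\mathcal S;G)\to C^\Omega(K;G)$, and a natural chain morphism $\theta:C^\Omega(K;G)\to C(K;G)$. Taking $\alpha$ to be the common refinement of $\widetilde{\mathcal V}$ and $\kappa^{-1}(\mathcal S)$, the proposed chain morphism is
\[
\phi \;=\; \Phi\circ\theta\circ\gamma\circ\kappa_\sharp\colon S(W,\alpha;G)\to S(X;G).
\]

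To finish, I need to verify that $\phi|S(X,\alpha;G)$ is $\mathcal U$-close to the identity morphism on $S(X,\alpha;G)$, and I would do this by a direct transcription of the closing paragraph of the proof of Proposition~4.2. For $\sigma\in S(X,\alpha;G)$ with $|\sigma|\subset V_\sigma\in\mathcal V$ each face $\tau$ still satisfies $|\tau|\subset V_\sigma$; meanwhile the description of $\gamma$ recalled in the proof of Proposition~4.2 shows that every vertex of $(\theta\circ\gamma\circ\kappa_\sharp)(\tau)\in C(K;G)$ is some $V_i\in\mathcal V$ meeting $V_\sigma$, so these vertices span a simplex $\delta\in K$ for which the $\mathcal U_1$-smallness of $\Phi$ places $|\phi(\tau)|$ inside a single $U_\delta\in\mathcal U_1$. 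A routine star-refinement argument combining this with the fact that $\mathcal U_1$ star-refines $\mathcal U$ then produces a single $U\in\mathcal U$ with $|\tau|\cup|\phi(\tau)|\subset U$ for all faces $\tau$ of $\sigma$, which is the required closeness. The main obstacle is precisely this final bookkeeping across the nested covers $\mathcal V,\mathcal V_1,\mathcal U_1,\mathcal U$; every other step is an essentially verbatim transcription of constructions already carried out in Sections~3 and~4.
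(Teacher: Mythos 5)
Your proof is correct in substance, but it takes a genuinely different route from the one in the paper. The paper's argument is very short: it first embeds $X$ (not necessarily as a closed subset) into an $(n+1)$-dimensional metrizable $AR$-space $Z$ using Olszewski's theorem, applies Proposition~3.6 with $f=\mathrm{id}_X$, $A=X$, $M=Z$ to get a neighborhood $W_Z\subset Z$, a cover $\beta$ of $W_Z$ and a chain morphism $\phi:S(W_Z,\beta;G)\to S(X;G)$ with the required approximate closeness, and then, for an arbitrary metric $Y\supset X$ with $X$ closed, simply takes any map $r\colon Y\to Z$ extending the identity on $X$ (which exists because $Z$ is an $AR$) and sets $W=r^{-1}(W_Z)$, $\alpha=r^{-1}(\beta)$, $\phi\circ r_\sharp$. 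By contrast, you re-run the nerve--canonical-map machinery of Propositions~3.6 and~4.2 directly inside the ambient $Y$: build a low-order cover $\mathcal V$ of $X$, form its nerve $K$ of dimension $\le n$, extend $\mathcal V$ to a nerve-preserving family $\widetilde{\mathcal V}$ in $Y$, and compose $\Phi\circ\theta\circ\gamma\circ\kappa_\sharp$ on $S(W,\alpha;G)$. Both arguments are valid. The paper's is considerably more economical because it re-uses Proposition~3.6 wholesale, paying only the price of invoking Olszewski's embedding theorem and the existence of the retraction $r$. Yours is longer but self-contained, and it sidesteps a small wrinkle in the paper's presentation: Proposition~3.6 is stated for a \emph{closed} subset $A\subset M$, whereas the Olszewski embedding of $X$ into $Z$ is explicitly allowed to be non-closed (the proof of 3.6 does go through without closedness, but the paper does not remark on this). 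One point where your write-up should be made explicit: the nerve-preserving extension of $\mathcal V$ to $\widetilde{\mathcal V}$ must also satisfy $\widetilde V\cap X=V$, not merely preserve the intersection pattern, since your final closeness argument relies on reading off $|\sigma|\subset V_\sigma\in\mathcal V$ from $|\sigma|\subset\widetilde V_\sigma$ and $|\sigma|\subset X$. This is a standard fact for a locally finite open cover of a closed subset of a metric space, but it deserves a sentence.
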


\begin{proof}
Let $X$ be a metric $lc^n_G$-space and $\mathcal U$ be an open cover of $X$.
By \cite{ol} $X$ can be embedded as subset (not necessarily a closed) of
an $(n+1)$-dimensional metrizable $AR$-space $Z$.
According to Proposition 3.6,
there exist an open set $W_Z\subset Z$ containing $X$, an open cover $\beta$ of $W_Z$ and
a chain morphism $\phi:S(W_Z,\beta;G)\to S(X;G)$
such that $\phi|S(X,\beta;G)$ and the identity on $S(X,\beta;G)$ are $\mathcal U$-close.
Now, assume $X$ is a closed subset of a metric space $Y$ and $r:Y\to Z$ is a map
extending the identity on $X$ (such $r$ exists because $Z$ is an $AR$).
Let  $W=r^{-1}(W_Z)$ and $\alpha=r^{-1}(\beta)$.
Then $\phi\circ r_\sharp:S(W,\alpha;G)\to S(X;G)$ is an algebraic approximate
$\mathcal U$-retraction.
\end{proof}

It is well known that if $f\colon X\to Y$ is a closed homotopically $UV^n$-surjection
between metric spaces,
then $Y$ is $LC^n$, see \cite{bog}, \cite{da}, \cite{du}.
The question  whether the homological version of this result is also true is very natural.
It is easily seen that
$Y$ is $lc^0_G$ provided $f$ is a closed homologically $UV^0_G$-surjection
between paracompact spaces.
We can show that $Y$ has the an \lq\lq approximate version" of the $lc^n_G$-property
if $f$ is a closed homologically $UV^n_G$-surjection.

\begin{dfn}
A space $X$ has the {\em approximate $lc^n_G$-property}
if for every $x\in X$ and its neighborhood $U_x$ in $X$
there exist two neighborhoods $V_x\subset W_x$
such that for every  cycle $c^k\in S_k(V_x;G)$, $k\leq n$,
there exists a cycle $\widetilde c^k\in S_k(W_x;G)$
whose vertices are the same as of $c^k$
and homologous to zero in $U_x$.
\end{dfn}

\begin{pro}
Let $f:X\to Y$ be a closed homologically $UV^n(G)$-surjection between paracompact spaces.
Then $Y$ is approximately $lc^n_G$.
\end{pro}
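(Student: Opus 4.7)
The plan is \emph{lift--fill--push-down}: approximately lift $c^k$ from $V_y\subset Y$ to a cycle in $X$ via Corollary~3.4, fill it in $X$ using the $UV^n(G)$-property of $f^{-1}(y)$, and project the bounding chain back via $f_\sharp$. To set up, given a neighborhood $U_y$ of $y$, I apply the definition of homological $UV^n(G)$ to the closed fiber $f^{-1}(y)\subset f^{-1}(U_y)$ to obtain a neighborhood $V$ of $f^{-1}(y)$ in $X$ with $V\hookrightarrow f^{-1}(U_y)$ trivial on $H_k$ for every $k\le n$. Since $f$ is closed, there is a neighborhood $W_y$ of $y$ in $Y$ with $f^{-1}(W_y)\subset V$, and thus $f^{-1}(W_y)\hookrightarrow f^{-1}(U_y)$ is also trivial on $H_k$ for $k\le n$. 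I pick $W_y'$ with $\overline{W_y'}\subset W_y$, form the open cover $\mathcal U=\{W_y,\,Y\setminus\overline{W_y'}\}$ of $Y$, apply Corollary~3.4 to obtain a refining cover $\mathcal V$, and let $V_y\subset W_y'$ be a neighborhood of $y$ contained in some element of $\mathcal V$.

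Given a cycle $c^k=\sum g_i\sigma_i^k\in S_k(V_y;G)$ with $k\le n$, the inclusion $\phi:S^{(n+1)}(V_y;G)\to S(Y;G)$ is $\mathcal V$-small. For every $v\in V_y$ I pick a lift $x_v\in f^{-1}(v)$ and define $\varphi_L(v)=x_v$ on the sub-complex $L=S_0(V_y;G)$; this $\varphi_L$ is correct and satisfies $f_\sharp\circ\varphi_L=\phi|L$, so Corollary~3.4 extends it to $\widetilde\varphi:S^{(n+1)}(V_y;G)\to S(X;G)$ with $\phi$ and $f_\sharp\circ\widetilde\varphi$ being $\mathcal U$-close. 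Because no simplex of $c^k$ lies in $Y\setminus\overline{W_y'}$, this closeness forces $|\widetilde\varphi(\sigma_i^k)|\subset f^{-1}(W_y)$, so $\widetilde\varphi(c^k)$ is a $k$-cycle in $f^{-1}(W_y)$; by the choice of $W_y$ there exists $\eta\in S_{k+1}(f^{-1}(U_y);G)$ with $\partial\eta=\widetilde\varphi(c^k)$. Setting $\widetilde c^k:=f_\sharp\widetilde\varphi(c^k)\in S_k(W_y;G)$, one has $\partial(f_\sharp\eta)=f_\sharp(\partial\eta)=\widetilde c^k$, so $\widetilde c^k$ is a cycle in $W_y$ bounding $f_\sharp\eta\in S_{k+1}(U_y;G)$.

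The main obstacle is ensuring that $\widetilde c^k$ has \emph{the same vertices} as $c^k$. The zero-skeleton already matches, since $f_\sharp(\widetilde\varphi(v))=f_\sharp(x_v)=v$ for every vertex $v$ of $c^k$. The delicate point is that when $\widetilde\varphi$ is built inductively over higher-dimensional simplices, the bounding chains chosen in $X$ can introduce auxiliary $0$-simplices whose $f$-images lie outside the vertex set of $c^k$. The remedy I propose is to refine the inductive step of the proofs of Propositions~3.1 and 3.3 so that for each singular simplex $\sigma$ the chain $\widetilde\varphi(\sigma)$ is constructed with vertex $0$-simplices contained in $\{x_v:v\in\sigma^{(0)}\}$---exploiting the flexibility available in a $UV^n(G)$-region, for instance by iterated coning from a vertex lift inside a sufficiently small $UV^0$-refinement of $W_y$---so that after applying $f_\sharp$ the vertex set of $\widetilde c^k$ coincides with that of $c^k$.
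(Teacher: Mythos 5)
Your first two paragraphs reproduce the paper's argument: choose $W_y$ with $f^{-1}(W_y)\stackrel{H_k}{\hookrightarrow}f^{-1}(U_y)$ for all $k\leq n$, form the two-element cover $\{W_y,\,Y\setminus\overline{W_y'}\}$, pull a refining $\mathcal V$ from Corollary~3.4, lift the identity on $S^{(n+1)}(V_y;G)$ through $f_\sharp$ starting from a vertex lift $\varphi_L(v)=x_v\in f^{-1}(v)$, confine the lifted cycle to $f^{-1}(W_y)$ via $\mathcal U$-closeness, bound it in $f^{-1}(U_y)$, and push down by $f_\sharp$. (Your reading $x_v\in f^{-1}(v)$ is the correct one; the paper's ``$x_z\in f^{-1}(y)$'' must be a slip, since $f_\sharp\circ\varphi=\phi|L$ would otherwise fail.) Omitting the preliminary star-refinement of $\mathcal U_1$ is harmless here, since closeness with respect to a two-element cover already forces $|f_\sharp\widetilde\varphi(\tau)|\subset W_y$ for every face $\tau$ of a simplex of $c^k$.

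The third paragraph, however, flags a real difficulty but does not resolve it. You are right that Corollary~3.4 gives no control over the $0$-faces of $\widetilde\varphi(\sigma)$ when $\dim\sigma\geq 1$: Proposition~3.3 picks \emph{some} chain bounding $\widetilde\varphi(\partial\sigma)$ in a $UV^n(G)$-neighborhood, and the homological $UV^n(G)$-property is silent about the vertices of that bounding chain. But the remedy you sketch does not work as stated. ``Iterated coning from a vertex lift'' makes the cone point a vertex of every $\widetilde\varphi(\sigma)$; for the vertex set to lie in $\{x_v:v\in\sigma^{(0)}\}$, the cone point must vary with $\sigma$, and then compatibility $\partial\widetilde\varphi_{k+1}=\widetilde\varphi_k\partial$ across shared faces fails, i.e.\ $\widetilde\varphi$ is no longer a chain morphism. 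Moreover, for $k\geq 2$, triviality of the inclusion on $H_{k-1}$ provides only existence of a bounding $k$-chain---it gives no mechanism to produce one whose $0$-faces are restricted to the prescribed $x_v$'s (the $k=1$ case is special, since $\tilde H_0$-triviality does yield a path joining $x_{v_0}$ to $x_{v_1}$). The paper's own sentence asserting that the ``vertexes'' of $\widetilde c^k$ and $c^k$ coincide ``since $f_\sharp\circ\varphi=\phi|L$'' runs into exactly this difficulty: that identity only controls $\widetilde\varphi$ on the $0$-skeleton $L$, not the $0$-faces of the chains $\widetilde\varphi(\sigma)$ for $\dim\sigma\geq 1$. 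So the vertex-matching requirement in Definition~4.5 still needs a genuine argument, and the step as you have sketched it would fail.
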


\begin{proof}
Let $y\in Y$ and $U_y\subset Y$ be a neighborhood of $y$.
Choose another two neighborhoods $W_y$ and $O_y$ of $y$ such that
$\overline O_y\subset W_y$ and $f^{-1}(W_y)\stackrel{H_{k}}{\hookrightarrow}f^{-1}(U_y)$
for all $k\leq n$. Then $\mathcal U_1=\{W_y,Y\setminus\overline O_y\}$
is an open cover of $Y$ and let $\mathcal U$ be a star-refinement of $\mathcal U_1$.
There exists another open cover $\mathcal V$ of $Y$ refining $\mathcal U$ and
satisfying the hypotheses of Corollary 3.4. Let $V_y$ be an element of
$\mathcal V$ containing $y$.
Obviously, $\rm{St}(V_y,\mathcal U)\subset W_y$.
Consider $L=S_0(V_y;G)$ as a sub-complex of $S^{(n+1)}(V_y;G)$.
Identifying the points of $V_y$ and $f^{-1}(V_y)$ with the singular $0$-simplexes
in $V_y$ and $f^{-1}(V_y)$, respectively,
for every $z\in V_y$ we define $\varphi'(z)=x_z$ with $x_z\in f^{-1}(y)$,
and extend $\varphi'$ to a homomorphism
$\varphi:L\to S_0(X;G)$.
We can consider $\varphi$ as a correct chain morphism from $L$ into $S(X;G)$.
Since the identity morphism $\phi:S^{(n+1)}(V_y;G)\to S(Y;G)$ is correct and
$\mathcal V$-small and $f_\sharp\circ\varphi=\phi|L$,
according to Corollary 3.4,
$\varphi$ can be extended to a chain morphism
$\widetilde{\varphi}:S^{(n+1)}(V_y;G)\to S(X;G)$
such that
$\phi$ and $f_\sharp\circ\widetilde{\varphi}$ are $\mathcal U$-close.
Now, suppose $c^k$ is a singular $k$-cycle in $S^{(n+1)}(V_y;G)$ for some $k\leq n$.
Then
$\eta^k=\widetilde{\varphi}(c^k)$ is a cycle in $S_k(X;G)$ such that
$f_\sharp(\eta^k)$ is $\mathcal U$-close to $c^k$.
Since $f_\sharp\circ\varphi=\phi|L$,
the \lq\lq vertexes" of $\widetilde c^k=f_\sharp(\eta^k)$ and $c^k$ coincide,
so $|\widetilde c^k|\subset\rm{St}(V_y,\mathcal U)\subset W_y$.
Hence, $|\eta^k|\subset f^{-1}(W_y)$ and
there exists a $(k+1)$-chain $\eta^{k+1}\in S(f^{-1}(U_y);G)$ with $\partial\eta^{k+1}=\eta^{k}$.
This implies that
$\widetilde c^k$ is homologous to zero in $U_y$.
Therefore $Y$ is approximatively $lc^n_G$.
\end{proof}


\end{document}